\documentclass[11pt,leqno]{amsart}

\usepackage{amsmath}
\usepackage{amssymb}
\usepackage{amsthm}
\usepackage{amsfonts}
\usepackage{mathtools}
\usepackage{ascmac}
\usepackage{spalign}
\usepackage{fancybox} 
\usepackage[mathscr]{eucal}
\usepackage{footnpag}
\usepackage{siunitx}
\usepackage{multicol}
\usepackage{bbm}
\usepackage{framed}
\usepackage{here}

\usepackage{graphicx}
\usepackage{xcolor}
\usepackage{wrapfig}
\usepackage{float}

\usepackage{url}
\usepackage{comment}

\theoremstyle{definition}
\newtheorem{theorem}{Theorem}[section]

\newtheorem{proposition}[theorem]{Proposition}
\newtheorem{lemma}[theorem]{Lemma}
\newtheorem{corollary}[theorem]{Corollary}
\newtheorem{remark}[theorem]{Remark}

\newfont{\bg}{cmr9 scaled\magstep4}
\newcommand{\bigzero}{
\smash{\lower1.0ex\hbox{\bg 0}}}

\newcommand{\ctext}[1]{\raise0.2ex\hbox{\textcircled{\scriptsize{#1}}}}

\title[Average Hitting Times and Recurrence Structures I]
{Average Hitting Times and Recurrence STRUCTURES I: Powers of Cycle Graphs}

\author[Miezaki]{Tsuyoshi Miezaki}
\address{		Faculty of Science and Engineering, 
		Waseda University, 
		Tokyo 169--8555, Japan
}
\email{miezaki@waseda.jp} 

\author[Tamura]{Shunya Tamura*}
\thanks{*Corresponding author}
\address{	Okegawa City Okegawa West Junior High School, Saitama, 363-0027, Japan
}
\email{shunya.tamura059@gmail.com}

\keywords{  powers of cycle graphs, average hitting time, effective resistance, spanning tree, spanning forest, Fibonacci-type sequences}

\subjclass[2020]{Primary 05C30; Secondary 05C50, 05A15}

\begin{document}
\begin{abstract}
We investigate the average hitting times of simple random walks on the $k$-th power graph $C_N^k$ of the cycle graph $C_N$.

First, we show that the average hitting times are characterized by a difference equation corresponding to the graph Laplacian. Next, by using the cyclic symmetry of $C_N^k$, we derive a spectral representation via Fourier analysis. Furthermore, by applying factorization and partial fraction decomposition of the corresponding difference operator, we obtain an explicit formula for the average hitting times consisting of a quadratic term and finitely many correction terms.

These correction terms are described by second-order linear recurrence sequences associated with the characteristic polynomials, and can be regarded as natural generalizations of Fibonacci-type sequences. As a consequence, our formulas recover the known results for cycle graphs and squares of cycle graphs in a unified way.
Moreover, from the formulas obtained for average hitting times, we derive explicit formulas for the effective resistances, the numbers of spanning trees, the numbers of two-component spanning forests, and the numbers of spanning trees of vertex-identified graphs. In particular, for the third power graph $C_N^3$ of the cycle graph, all of these quantities are written explicitly in terms of complex conjugate Fibonacci-type sequences.

Our results clarify structural relations between random walk quantities and combinatorial quantities on cycle power graphs.
\end{abstract}
\maketitle

\section{Introduction} \label{sec01}

The relation between random walks and electric networks is one of the fundamental topics in combinatorics, probability theory, and spectral graph theory. In particular, it is well known that there exist deep relations among average hitting times, effective resistances, the numbers of spanning trees, and the numbers of two-component spanning forests
\cite{ChandraEtAl1989,ChebotarevShamis1997,Kirchhoff1847,NashWilliams1959,Wu2004}.

For the simple random walk on the cycle graph $C_N$, it has long been known that the average hitting time is given by
\[
h(0,\ell)=\ell(N-\ell).
\]
On the other hand, for the square graph $C_N^2$ of the cycle graph, it is known that the average hitting times can be expressed in terms of Fibonacci numbers.
N.~Chair \cite{Chair2014} derived explicit formulas for the average hitting times by using the method of effective resistances.
Furthermore,
Doi et al.~\cite{DoiEtAl2022}
gave a simpler expression in terms of Fibonacci numbers and clarified relations between average hitting times and Fibonacci structures: 
\[
h(0,\ell)
=
\frac{2}{5}\,\ell(N-\ell)
+
\frac{4}{5}\,N
\frac{F_{2\ell}F_{2(N-\ell)}}{F_{2N}}.
\]

These results naturally raise the following question: whether analogous Fibonacci-type structures appear for the graph \(C_N^3\) and, more generally, for the \(k\)-th power graph \(C_N^k\) of the cycle graph. 
However, such generalizations do not seem to follow easily from the method of \cite{DoiEtAl2022}.

In general, by using the spectral decomposition of the Laplacian matrix, the average hitting time on $C_N^k$ can be written as
\[
h(0,\ell)
=
\sum_{j=1}^{N-1}
\frac{
1-\cos(2\pi j \ell/N)
}{
1-\frac1k\sum_{r=1}^k\cos(2\pi jr/N)
}.
\]
However, this expression remains as a finite sum, and the underlying structure is not necessarily clear.

The purpose of this paper is to investigate the average hitting times on the cycle power graph $C_N^k$ 
from the viewpoint of difference operators and Fourier analysis, and to derive explicit and structural formulas.
In particular, by factorizing the corresponding difference operator, we show 
%
that the average hitting time on $C_N^k$ is expressed as
\[
h(0,\ell)
=
\frac{6}{(k+1)(2k+1)}\,\ell(N-\ell)
+
N\sum_{a=1}^{k-1}
A_a
\frac{
V_\ell^{(a)}V_{N-\ell}^{(a)}
}{
V_N^{(a)}
},
\]
where each sequence $V_n^{(a)}$ satisfies the second-order linear recurrence relation
\[
V_{n+1}^{(a)}
=
\gamma_aV_n^{(a)}-V_{n-1}^{(a)}.
\]
These sequences can be regarded as natural generalizations of Fibonacci-type sequences.

As consequences of our results, we recover the classical formula
\[
h(0,\ell)=\ell(N-\ell)
\]
for the case $k=1$, and derive the Fibonacci-type formulas in
\cite{Chair2014,DoiEtAl2022}
for the case $k=2$ in a unified way.
Furthermore, in the case $k=3$, complex conjugate second-order recurrence sequences naturally appear.
Using them, we obtain explicit formulas for the average hitting times, effective resistances, the numbers of spanning trees, the numbers of two-component spanning forests, and the numbers of spanning trees of vertex-identified graphs.
To the best of our knowledge, such explicit formulas for these quantities on $C_N^3$ have not appeared previously.

On the other hand, it is still unclear why such second-order linear recurrence relations naturally appear for cycle power graphs, and how these structures are related to Fibonacci-type phenomena.
A conceptual explanation will be discussed in forthcoming work. 

The rest of this paper is organized as follows.
In Section \ref{sec02}, we prepare basic facts on cycle power graphs, random walks, and average hitting times.
In Section 2.2, we study the factorization of the corresponding difference operators and the associated second-order recurrence structures.
In Section \ref{sec04}, we derive spectral representations of average hitting times by Fourier analysis.
Section \ref{sec05} gives explicit formulas for average hitting times by using Fourier analysis and periodization.
In Section \ref{sec06}, we derive formulas for effective resistances, spanning trees, two-component spanning forests, and spanning trees of vertex-identified graphs.
Finally, in Section \ref{sec07}, we discuss the cases of $C_N$, $C_N^2$, and $C_N^3$ in detail.
In particular, for $C_N^3$, we give completely explicit formulas for all of these quantities.

\section{Preliminaries} \label{sec02}

\subsection{Basic definitions}

In this section, we prepare notation and basic definitions used throughout this paper.

Let
\[
\mathbb{Z}_N=\{0,1,\dots,N-1\}
\]
be the cyclic group modulo $N$.
All computations are considered on $\mathbb{Z}_N$.

For a positive integer $k$, the $k$-th power graph $C_N^k$ of the cycle graph is defined by the vertex set
\[
V(C_N^k)=\mathbb{Z}_N
\]
and the edge set
\[
E(C_N^k)
=
\left\{
\{i,j\}
\mid
i-j\equiv \pm r \pmod N,
\ 1\le r\le k
\right\}.
\]
In other words, in $C_N^k$, each vertex is adjacent to all vertices whose cycle distance is
\[
1,2,\dots,k.
\]
Throughout this paper, we mainly consider the case
\[
N\ge 2k+1.
\]
Under this assumption, $C_N^k$ is $2k$-regular.

Next, we consider the simple random walk on $C_N^k$.
If the walker is at a vertex $x$ at time $t$, then at the next step the walker moves to one of
\[
x\pm1,\ x\pm2,\dots,\ x\pm k
\]
with equal probability $1/(2k)$.
We denote this stochastic process by
\[
(X_t)_{t\ge0}.
\]
For a vertex $q$, the hitting time of $q$ is defined by
\[
T_q
=
\inf\{t\ge0\mid X_t=q\}.
\]
The average hitting time from a vertex $p$ to a vertex $q$ is defined by
\[
h(p,q)
=
\mathbb{E}_p[T_q].
\]
In particular, we mainly treat the average hitting time from the vertex $0$ to the vertex $\ell$: $h(0,\ell)$.

Finally, we define the shift operator $T$ by
\[
(Tf)(i)=f(i+1),
\]
where all indices are considered modulo $N$.
This operator will be used later to describe difference operators and spectral representations.

\subsection{Basic Equations and Difference Operators}
\label{sec03}

In this section, we derive the basic equation satisfied by the average hitting times on $C_N^k$ and investigate its structure as a difference operator.


\begin{proposition} \label{prop:basic-equation}
For the simple random walk on $C_N^k$, let
\[
h(i)=h(0,i)
\]
be the average hitting time from the vertex $0$ to the vertex $i$.
Then $h$ satisfies
\[
h(0)=0,
\]
and
\[
2k\,h(i)-\sum_{r=1}^k \bigl(h(i-r)+h(i+r)\bigr)=2k
\qquad (i\neq 0).
\]
Equivalently,
\[
L_k h = 2k\,\mathbf{1},
\]
where
\[
L_k
=
2kI-\sum_{r=1}^k(T^r+T^{-r}),
\]
and $\mathbf{1}$ denotes the constant function.
\end{proposition}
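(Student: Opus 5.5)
The plan is to reduce the statement to the standard first-step analysis for hitting times of a \emph{fixed} target. The difference equation in Proposition~\ref{prop:basic-equation} is written in the target variable $i$ of $h(0,i)$. First-step analysis, however, naturally produces an equation in the \emph{starting} vertex. So I will first use the symmetries of $C_N^k$ to move the variable into the starting position, and then apply the Markov property.

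\textbf{Step 1 (finiteness).} $C_N^k$ is connected, since it contains the cycle $C_N$. Hence the simple random walk is an irreducible Markov chain on the finite set $\mathbb{Z}_N$. Consequently $\mathbb{E}_p[T_q]<\infty$ for all $p,q$, and all quantities below are finite real numbers, so they can be manipulated algebraically.

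\textbf{Step 2 (symmetry).} Both $x\mapsto x+c$ and $x\mapsto -x$ are graph automorphisms of $C_N^k$, because the edge condition $i-j\equiv \pm r$ is preserved by each. The law of the walk is therefore invariant under them, so $h(p,q)=h(p+c,q+c)=h(-p,-q)$. Applying this,
\[
h(0,i)=h(-i,0)=h(i,0).
\]
Setting $g(j)=h(j,0)$, we get $h(i)=g(i)$ for all $i$. It therefore suffices to prove the claimed relations for $g$.

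\textbf{Step 3 (first-step analysis).} Clearly $g(0)=0$, since $T_0=0$ when starting at $0$. For $j\neq 0$ we have $T_0\ge 1$, and conditioning on $X_1$ with the Markov property gives
\[
g(j)=1+\frac{1}{2k}\sum_{r=1}^k\bigl(g(j-r)+g(j+r)\bigr).
\]
This uses $N\ge 2k+1$: the $2k$ neighbours $j\pm r$ are then distinct, so each is chosen with probability exactly $1/(2k)$. Multiplying by $2k$ and rearranging gives $2k\,g(j)-\sum_{r}(g(j-r)+g(j+r))=2k$, which is exactly the claimed equation via Step 2.

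\textbf{Step 4 (operator form).} The left-hand side equals $(L_k g)(j)$, because $(T^{\pm r}g)(j)=g(j\pm r)$. The operator identity $L_k h=2k\mathbf 1$ should be read on $\{i\neq 0\}$, together with the boundary condition $h(0)=0$. At $i=0$ the value of $L_k h$ is instead forced, since $\sum_i (L_k h)(i)=0$; I would note this explicitly, as it is what the later Fourier analysis will need.

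I expect no genuine obstacle here. The only subtle point is Step 2: one must justify that $h(0,i)$, which is a function of the target, satisfies the same equation as $h(i,0)$, which is a function of the start. The reflection symmetry $x\mapsto -x$ is exactly what makes this work.
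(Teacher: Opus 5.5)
Your proof is correct and is essentially the paper's argument: the paper also conditions on the first step (``by the Markov property''), writing the recursion directly in the target variable. Your symmetry reduction and your remark that $L_kh=2k\mathbf{1}$ holds only for $i\neq 0$ (indeed $(L_kh)(0)=-2k(N-1)$) just make explicit what the paper leaves implicit. One small correction to your commentary: the reflection $x\mapsto -x$ is not actually needed. Conditioning on $X_1$ from the start vertex $0$ gives $h(0,i)=1+\frac{1}{2k}\sum_{r=1}^k\bigl(h(r,i)+h(-r,i)\bigr)$, and translation invariance alone turns $h(\pm r,i)$ into $h(0,i\mp r)$.
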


\begin{proof}
By the Markov property, for $i\neq0$, we have
\[
h(i)
=
1+\frac{1}{2k}
\sum_{r=1}^k
\bigl(h(i-r)+h(i+r)\bigr).
\]
Multiplying both sides by $2k$ and rearranging the terms, we obtain
\[
2k\,h(i)
-
\sum_{r=1}^k
\bigl(h(i-r)+h(i+r)\bigr)
=
2k.
\]
Since the left-hand side coincides with $(L_kh)(i)$, it follows that
\[
(L_kh)(i)=2k.
\]
Moreover, by definition, we have
\[
h(0)=0.
\]
\end{proof}


Let 
\[
\widetilde{\Phi}_k(z)
=
2k-\sum_{r=1}^k(z^r+z^{-r}).
\]
Then we have 
\[
L_k=\widetilde{\Phi}_k(T),
\]

Next, in order to investigate the structure of $\widetilde{\Phi}_k$, we put
\[
x=z+z^{-1}.
\]
By the Chebyshev recursion, each term $z^r+z^{-r}$ can be expressed as a polynomial in $x$. Thus we define
\[
P_r(x):=z^r+z^{-r}.
\]
Hence we have
\[
\widetilde{\Phi}_k(z)
=
2k-\sum_{r=1}^k(z^r+z^{-r})
=
2k-\sum_{r=1}^k P_r(x),
\]
and therefore there exists a polynomial $\Phi_k(x)$ satisfying
\[
\widetilde{\Phi}_k(z)=\Phi_k(x).
\]


\begin{lemma} \label{lem32}
The polynomial $\Phi_k(x)$ can be factorized as
\[
\Phi_k(x)=(2-x)\Psi_k(x),
\]
where $\Psi_k(x)$ is a polynomial of degree $k-1$.
\end{lemma}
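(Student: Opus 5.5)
We will show that $x=2$ is a root of $\Phi_k$ and that $\Phi_k$ has degree exactly $k$; the factorization then follows from polynomial division. First we pin down $P_r$. From $z^{r+1}+z^{-(r+1)}=(z+z^{-1})(z^r+z^{-r})-(z^{r-1}+z^{-(r-1)})$ we get the Chebyshev recursion
\[
P_{r+1}(x)=xP_r(x)-P_{r-1}(x),\qquad P_0(x)=2,\quad P_1(x)=x.
\]
Induction on $r$ then shows that $P_r$ is a polynomial with integer coefficients, of degree exactly $r$ and with leading coefficient $1$ for $r\ge1$. Hence
\[
\Phi_k(x)=2k-\sum_{r=1}^kP_r(x)
\]
has degree exactly $k$, with leading coefficient $-1$ coming from $-P_k$. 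It is a genuine polynomial, and it is well defined: $x\mapsto z+z^{-1}$ takes infinitely many values as $z$ ranges over $\mathbb{C}^\times$, so two polynomials agreeing on all such $x$ coincide.

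Next we evaluate at $x=2$, which corresponds to $z=1$. Since $P_r(2)=1^r+1^{-r}=2$ for every $r$, we get
\[
\Phi_k(2)=2k-\sum_{r=1}^k 2=0.
\]
One can also check $P_r(2)=2$ directly from the recursion: $2\cdot 2-2=2$. Because $\Phi_k$ has coefficients in $\mathbb{Z}$ and $x-2$ is monic, the factor theorem gives $\Phi_k(x)=(2-x)\Psi_k(x)$ with $\Psi_k\in\mathbb{Z}[x]$. Comparing degrees, $\deg\Psi_k=k-1$, and its leading coefficient is $1$.

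Optionally, we can describe $\Psi_k$ explicitly by dividing each term: $2-P_r(x)=(2-x)Q_r(x)$. This uses $2-z^r-z^{-r}=-z^{-r}(z^r-1)^2$ and $2-x=-z^{-1}(z-1)^2$, so
\[
Q_r=\left(\frac{z^{r}-1}{z-1}\right)^2 z^{1-r}=\Bigl(\sum_{j=0}^{r-1}z^{j-(r-1)/2}\Bigr)^2,
\]
which is symmetric under $z\mapsto z^{-1}$ and therefore a polynomial in $x$ of degree $r-1$. Then $\Psi_k=\sum_{r=1}^k Q_r$.

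There is no real obstacle here. The only points needing care are confirming that the top-degree terms do not cancel, which holds because only $P_k$ contributes degree $k$, and the well-definedness of $\Phi_k$ as a polynomial in $x$.
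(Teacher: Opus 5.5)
Your proof is correct and follows essentially the same route as the paper. The paper gets $\deg\Phi_k=k$ from $\Phi_k(x)=2k-2\sum_{r=1}^k T_r(x/2)$ via the substitution $x=2\cos\theta$, then checks $\Phi_k(2)=0$ and applies the factor theorem, exactly as you do with the recursion $P_{r+1}=xP_r-P_{r-1}$. Your extras are also correct: the fact that $\Psi_k$ is monic, which the paper tacitly uses when writing $\Psi_k(x)=\prod_a(x-\gamma_a)$ in Theorem~\ref{thm65}, and the explicit formula $\Psi_k=\sum_{r=1}^k Q_r$, which gives $\Psi_k(2)=\sum_{r=1}^k r^2$ at once.
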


\begin{proof}
Putting
\[
x=2\cos\theta,
\]
we have
\[
z^r+z^{-r}=2\cos(r\theta).
\]
Hence
\[
\Phi_k(x)
=
2k-2\sum_{r=1}^k\cos(r\theta).
\]

Using the Chebyshev polynomial $T_r$ of the first kind,
\[
\cos(r\theta)
=
T_r(\cos\theta)
=
T_r(x/2),
\]
and therefore
\[
\Phi_k(x)
=
2k-2\sum_{r=1}^k T_r(x/2).
\]
Since $T_r(x/2)$ is a polynomial of degree $r$, the polynomial $\Phi_k(x)$ has degree $k$.

Next, substituting $x=2$, which corresponds to $\theta=0$, we obtain
\[
\Phi_k(2)
=
2k-2\sum_{r=1}^k\cos(0)
=
2k-2k
=
0.
\]
Hence, by the factor theorem, $(x-2)$ is a factor of $\Phi_k(x)$.

Therefore we can write
\[
\Phi_k(x)
=
(2-x){\Psi}_k(x).
\]
\end{proof}

\subsection{Spectral Representation} \label{sec04}

In this section, we give a spectral representation of the average hitting times on $C_N^k$.

\begin{lemma} \label{lem41}
We identify the vertex set of $C_N^k$ with $\mathbb{Z}_N$.
Let
\[
\phi_j(x)=\exp\!\left(\frac{2\pi i jx}{N}\right),
\qquad j=0,1,\dots,N-1
\]
be the Fourier basis.
Then the shift operator $T$ satisfies
\[
T\phi_j = e^{2\pi i j/N}\phi_j.
\]
Hence the operator
\[
L_k = 2kI - \sum_{r=1}^k (T^r + T^{-r})
\]
is diagonalized by the Fourier basis, and its eigenvalues are
\[
\lambda_j
=
2k - 2\sum_{r=1}^k \cos\!\left(\frac{2\pi jr}{N}\right),
\qquad j=0,1,\dots,N-1.
\]
In particular, $\lambda_0=0$, and $\lambda_j>0$ for $j\neq0$.
\end{lemma}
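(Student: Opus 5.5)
The plan is to compute the action of $T$ on $\phi_j$ directly, and then extend it to $L_k$ by linearity. By definition of the shift,
\[
(T\phi_j)(x)=\phi_j(x+1)=\exp\!\left(\frac{2\pi i j(x+1)}{N}\right)=e^{2\pi i j/N}\phi_j(x).
\]
This is well defined on $\mathbb{Z}_N$ because $e^{2\pi i j N/N}=1$. Iterating, and using that $T^{-1}$ is the inverse shift, gives $T^{\pm r}\phi_j=e^{\pm 2\pi i jr/N}\phi_j$.

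Applying $L_k$, the pair $T^r+T^{-r}$ acts on $\phi_j$ by the scalar $e^{2\pi ijr/N}+e^{-2\pi ijr/N}=2\cos(2\pi jr/N)$. Hence $L_k\phi_j=\lambda_j\phi_j$ with $\lambda_j$ as stated. Equivalently, $\lambda_j=\widetilde{\Phi}_k(e^{2\pi i j/N})$. The $\phi_j$ are orthogonal in $\ell^2(\mathbb{Z}_N)$ by the geometric sum $\sum_{x}e^{2\pi i (j-j')x/N}=N\delta_{jj'}$, so they form a basis of the $N$-dimensional space of functions on $\mathbb{Z}_N$. This basis diagonalizes $L_k$.

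For $\lambda_0$, we have $\lambda_0=2k-2k=0$ immediately. Positivity for $j\neq 0$ is the only step needing care. Each term satisfies $1-\cos(2\pi jr/N)\ge 0$, so
\[
\lambda_j=2\sum_{r=1}^k\bigl(1-\cos(2\pi jr/N)\bigr)\ge 0,
\]
with equality only if $jr\equiv 0 \pmod N$ for every $r=1,\dots,k$. The $r=1$ term alone already gives $1-\cos(2\pi j/N)>0$ for $1\le j\le N-1$, so $\lambda_j>0$. This step does not even require the assumption $N\ge 2k+1$; it reflects that $C_N^k$ is connected, since it contains the cycle $C_N$.
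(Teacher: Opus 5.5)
Your proposal is correct and follows essentially the same route as the paper: compute $T\phi_j$ directly, iterate to get $T^{\pm r}\phi_j=e^{\pm 2\pi i jr/N}\phi_j$, and combine the two exponentials via Euler's formula to obtain $\lambda_j$. You also justify two points the paper's proof leaves implicit: that the $\phi_j$ form a basis (by orthogonality), and that $\lambda_0=0$ while $\lambda_j>0$ for $j\neq 0$ (by writing $\lambda_j=2\sum_{r=1}^{k}\bigl(1-\cos(2\pi jr/N)\bigr)$ and noting that the $r=1$ term is already strictly positive).
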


\begin{proof}
For the Fourier basis, we have
\[
T\phi_j(x)=\phi_j(x+1)=e^{2\pi i j/N}\phi_j(x).
\]
Hence
\[
T^r\phi_j = e^{2\pi i jr/N}\phi_j,
\qquad
T^{-r}\phi_j = e^{-2\pi i jr/N}\phi_j.
\]
Therefore,
\[
L_k\phi_j
=
\left\{
2k-\sum_{r=1}^k
\left(e^{2\pi i jr/N}+e^{-2\pi i jr/N}\right)
\right\}\phi_j.
\]
By Euler's formula, the desired eigenvalue formula follows.
\end{proof}

The following spectral representation is standard; see, for example, [1, Chapter 2, Lemma 12] and [13, Proposition 2.4]. We include a proof adapted to the present setting for completeness.
\begin{proposition} \label{prop41}
For the simple random walk on $C_N^k$, the average hitting time from the vertex $0$ to the vertex $\ell$ is given by
\[
h(0,\ell)
=
\sum_{j=1}^{N-1}
\frac{1-\cos\left(\frac{2\pi j\ell}{N}\right)}
{1-\frac{1}{k}\sum_{r=1}^k\cos\left(\frac{2\pi jr}{N}\right)}.
\]
\end{proposition}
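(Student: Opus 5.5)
We solve the system of Proposition~\ref{prop:basic-equation} directly in the Fourier basis of Lemma~\ref{lem41}. The equation $L_k h = 2k\,\mathbf{1}$ holds only away from $0$, so we first correct it at the origin. Set $c := (L_k h)(0) - 2k$, so that
\[
L_k h = 2k\,\mathbf{1} + c\,\delta_0 .
\]
Summing over $\mathbb{Z}_N$ and using $\sum_i (L_k h)(i) = 0$ (each $T^{\pm r}$ preserves sums) gives $0 = 2kN + c$, hence $c = -2kN$. Therefore
\[
L_k h = 2k\,(\mathbf{1} - N\delta_0).
\]
This is the key step: it turns the boundary condition at $0$ into a source term with zero mean.

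Next we expand in the basis $\phi_j$. Write $\hat f(j) = \frac1N\sum_x f(x)\overline{\phi_j(x)}$. We have $\widehat{\mathbf 1}(j) = \delta_{j,0}$ and $\widehat{\delta_0}(j) = 1/N$. By Lemma~\ref{lem41}, $\lambda_j \hat h(j) = 2k(\delta_{j,0} - 1)$. For $j \neq 0$ this gives $\hat h(j) = -2k/\lambda_j$, while $\hat h(0)$ is undetermined because $\lambda_0 = 0$. Fourier inversion then gives
\[
h(x) = \hat h(0) - \sum_{j=1}^{N-1} \frac{2k}{\lambda_j}\, e^{2\pi i j x/N}.
\]
We fix the constant using $h(0) = 0$, which yields $\hat h(0) = \sum_{j\ne0} 2k/\lambda_j$. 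Hence
\[
h(\ell) = \sum_{j=1}^{N-1} \frac{2k}{\lambda_j}\bigl(1 - e^{2\pi i j\ell/N}\bigr).
\]
To make this real, pair $j$ with $N-j$: since $\lambda_j = \lambda_{N-j}$, the imaginary parts cancel and the exponential becomes $\cos(2\pi j\ell/N)$. Finally, dividing numerator and denominator by $2k$ gives the stated formula.

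The one point needing care is that $h$ is actually determined by the system, so that the solution we wrote down is the hitting time. Here we use $\lambda_j > 0$ for $j \neq 0$ from Lemma~\ref{lem41}. It implies that $\ker L_k$ consists only of constants, so the pinning condition $h(0) = 0$ forces uniqueness. We also need finiteness of $h$, which holds because $C_N^k$ is finite and connected.
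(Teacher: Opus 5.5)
Your proof is correct and uses the same spectral computation as the paper, organized as a direct derivation rather than guess-and-verify. The paper builds the Green function $G(x,y)=\frac1N\sum_{j\neq0}\phi_j(x-y)/\lambda_j$, checks that $2kN\{G(y,y)-G(x,y)\}$ solves the boundary-value problem, and then appeals to uniqueness. You instead fix $(L_kh)(0)=-2k(N-1)$ by the zero-sum identity and read off $\hat h(j)=-2k/\lambda_j$ from the actual hitting-time function, so uniqueness comes for free. Your zero-sum step is also what the paper's uniqueness claim tacitly needs: the difference of two solutions is a priori only $L_k$-harmonic away from the target vertex, and summing over $\mathbb{Z}_N$ is what shows it lies in $\ker L_k$.
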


\begin{proof}
The following arguments appeared in 
{\cite[Chapter 2, Lemma 12]{AF2002}, \cite[Proposition 2.4 ]{Saloff-Coste-Wang2025}}. 
We identify the vertex set of $C_N^k$ with $\mathbb{Z}_N$.
Let
\[
\phi_j(x)=\exp\left(\frac{2\pi i jx}{N}\right),
\qquad j=0,1,\dots,N-1
\]
be the Fourier basis.
By Lemma \ref{lem41}, the eigenvalues of $L_k$ are
\[
\lambda_j
=
2k-2\sum_{r=1}^k\cos\left(\frac{2\pi jr}{N}\right),
\]
and
\[
L_k\phi_j=\lambda_j\phi_j.
\]

For a vertex $y$, let $\delta_y$ be the delta function defined by
\[
\delta_y(x)
=
\begin{cases}
1 & (x=y),\\
0 & (x\neq y).
\end{cases}
\]
By Fourier inversion, 
\[
\delta_y(x)
=
\frac{1}{N}\sum_{j=0}^{N-1}\phi_j(x-y), 
\]
and therefore
\[
\delta_y(x)-\frac1N
=
\frac1N\sum_{j=1}^{N-1}\phi_j(x-y).
\]

Now define the Green function of $L_k$ by
\[
G(x,y)
=
\frac1N\sum_{j=1}^{N-1}\frac{\phi_j(x-y)}{\lambda_j}.
\]
Applying $L_k$ to this definition, we obtain
\[
L_kG(x,y)
=
\frac1N\sum_{j=1}^{N-1}
\frac{L_k\phi_j(x-y)}{\lambda_j}.
\]
Since $L_k\phi_j=\lambda_j\phi_j$, we have
\[
L_kG(x,y)
=
\frac1N\sum_{j=1}^{N-1}\phi_j(x-y).
\]
Therefore, by the Fourier expansion of the delta function,
\[
L_kG(x,y)=
\delta_y(x)-\frac1N.
\]

Let
\[
h_y(x)=h(x,y)
\]
be the average hitting time from $x$ to $y$.
By the basic equation of the average hitting time,
\[
h_y(y)=0
\]
and
\[
(L_kh_y)(x)=2k
\qquad (x\neq y)
\]
hold.
We define
\[
\widetilde{h}_y(x)
=
2kN\{G(y,y)-G(x,y)\}.
\]
Then clearly
\[
\widetilde{h}_y(y)=0.
\]
Moreover, since $G(y,y)$ is constant as a function of $x$, we have
\[
L_kG(y,y)=0.
\]
Thus
\[
(L_k\widetilde{h}_y)(x)
=
-2kN(L_kG(x,y)).
\]
Using
\[
L_kG(x,y)=\delta_y(x)-\frac1N,
\]
we see that, for $x\neq y$,
\[
(L_k\widetilde{h}_y)(x)
=
-2kN\left(-\frac1N\right)
=
2k.
\]
Hence $\widetilde{h}_y$ satisfies
\[
\widetilde{h}_y(y)=0,
\qquad
(L_k\widetilde{h}_y)(x)=2k
\quad (x\neq y).
\]
This is the same boundary value problem which characterizes the average hitting time $h_y$.
Since the kernel of $L_k$ on periodic functions consists only of constants and both functions vanish at $y$, the solution is unique. Therefore,
\[
h_y(x)
=
\widetilde{h}_y(x)
=
2kN\{G(y,y)-G(x,y)\}.
\]

Putting $x=0$ and $y=\ell$, we have
\[
h(0,\ell)
=
2kN\{G(\ell,\ell)-G(0,\ell)\}.
\]
By translation invariance,
\[
G(\ell,\ell)=G(0,0).
\]
Therefore
\[
h(0,\ell)
=
2kN\{G(0,0)-G(0,\ell)\}.
\]

By the definition of the Green function,
\[
G(0,0)
=
\frac1N
\sum_{j=1}^{N-1}
\frac{1}{\lambda_j},
\]
and
\[
G(0,\ell)
=
\frac1N
\sum_{j=1}^{N-1}
\frac{\phi_j(-\ell)}{\lambda_j}.
\]
Hence
\[
G(0,0)-G(0,\ell)
=
\frac1N
\sum_{j=1}^{N-1}
\frac{1-\phi_j(-\ell)}{\lambda_j}.
\]
Since
\[
\phi_j(-\ell)
=
\exp\left(-\frac{2\pi i j\ell}{N}\right),
\]
and the left-hand side is real, taking the real part gives
\[
G(0,0)-G(0,\ell)
=
\frac1N
\sum_{j=1}^{N-1}
\frac{1-\cos\left(\frac{2\pi j\ell}{N}\right)}{\lambda_j}.
\]
Thus
\[
h(0,\ell)
=
2k
\sum_{j=1}^{N-1}
\frac{1-\cos\left(\frac{2\pi j\ell}{N}\right)}{\lambda_j}.
\]

Finally, substituting
\[
\lambda_j
=
2k\left\{
1-\frac1k\sum_{r=1}^k
\cos\left(\frac{2\pi jr}{N}\right)
\right\}
\]
into the above formula, we obtain
\[
h(0,\ell)
=
\sum_{j=1}^{N-1}
\frac{1-\cos\left(\frac{2\pi j\ell}{N}\right)}
{1-\frac1k\sum_{r=1}^k\cos\left(\frac{2\pi jr}{N}\right)}.
\]
\end{proof}



\section{Proof of the Main Theorem}
\label{sec05}

In this section, we derive an explicit formula for the average hitting times by using the spectral representation obtained in Section \ref{sec04}.

\begin{lemma} \label{lem51}
Let
\[
\Phi_k(x)=(2-x)\Psi_k(x).
\]
Note that the roots of $\Psi_k(x)$ are distinct \cite{Szego}.
We denote them by
\[
\gamma_1,\dots,\gamma_{k-1}.
\]
Then
\[
\frac{2k}{\Phi_k(x)}
=
\frac{B}{2-x}
+
\sum_{a=1}^{k-1}
\frac{A_a}{\gamma_a-x}
\]
for some constants $B,A_1,\dots,A_{k-1}$.
These coefficients are given by
\[
B=\frac{2k}{\Psi_k(2)},
\qquad
A_a
=
-\frac{2k}{(2-\gamma_a)\Psi_k'(\gamma_a)}
\qquad (a=1,\dots,k-1).
\]
\end{lemma}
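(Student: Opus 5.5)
The plan is the standard partial fraction decomposition for a rational function with simple poles. The first step is to confirm that $2k/\Phi_k(x)$ has only simple poles, and that there are exactly $k$ of them: at $x=2$ and at $\gamma_1,\dots,\gamma_{k-1}$. By Lemma~\ref{lem32}, $\Phi_k(x)=(2-x)\Psi_k(x)$ with $\deg\Psi_k=k-1$, and by the cited fact the $\gamma_a$ are pairwise distinct. It remains to check that $2$ is not a root of $\Psi_k$, i.e.\ that $x=2$ is a simple root of $\Phi_k$. For this I would differentiate $\widetilde{\Phi}_k(z)=\Phi_k(z+z^{-1})$ twice at $z=1$. Since $x-2=(z-1)^2/z$, a root of multiplicity $m$ at $x=2$ gives a root of multiplicity $2m$ at $z=1$. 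Expanding $2k-\sum_r(z^r+z^{-r})$ near $z=1$, the second-order coefficient is $-\sum_r r^2\neq 0$, so the multiplicity in $z$ is exactly $2$ and $m=1$. Equivalently, I would compute $\Psi_k(2)=\lim_{x\to2}\Phi_k(x)/(2-x)$ via $\theta\to0$, using $2k-2\sum\cos r\theta\sim\theta^2\sum r^2$ and $2-2\cos\theta\sim\theta^2$. This gives $\Psi_k(2)=\sum_{r=1}^k r^2=k(k+1)(2k+1)/6\neq0$, which is also consistent with the leading coefficient $6/((k+1)(2k+1))=B$ in the main formula.

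Next, since the numerator $2k$ has degree $0<k=\deg\Phi_k$ and the denominator has $k$ distinct linear factors, the partial fraction theorem gives constants $B,A_1,\dots,A_{k-1}$ with the stated decomposition. There is no polynomial part.

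Finally, I would compute the coefficients by residues. Multiplying by $(2-x)$ and letting $x\to2$ gives $B=2k/\Psi_k(2)$. Multiplying by $(\gamma_a-x)$ and letting $x\to\gamma_a$ gives
\[
A_a=\lim_{x\to\gamma_a}\frac{2k(\gamma_a-x)}{(2-x)\Psi_k(x)}.
\]
Because $\Psi_k(\gamma_a)=0$ and $\Psi_k'(\gamma_a)\ne0$ (the root is simple), we have $\Psi_k(x)/(\gamma_a-x)\to-\Psi_k'(\gamma_a)$. Hence $A_a=-2k/((2-\gamma_a)\Psi_k'(\gamma_a))$, where $2-\gamma_a\neq0$ again by the simplicity of the root at $2$.

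The main obstacle is the non-vanishing of $\Psi_k(2)$, since the lemma does not state it explicitly. I would settle it with the $\theta\to0$ expansion above.
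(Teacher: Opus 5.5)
Your proposal is correct and follows the same route as the paper: a partial fraction decomposition over the $k$ simple poles $2,\gamma_1,\dots,\gamma_{k-1}$, with $B$ and $A_a$ obtained by multiplying by $(2-x)$ or $(\gamma_a-x)$ and taking limits. Your explicit check that $\Psi_k(2)=\sum_{r=1}^k r^2\neq 0$ justifies the paper's unproved assertion here that all poles are simple; the paper computes this value only later, in the proof of Theorem~\ref{thm51}, via $\Phi_k'(2)=-\sum_r T_r'(1)$.
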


\begin{proof}
By Lemma \ref{lem32}, we have
\[
\Phi_k(x)=(2-x)\Psi_k(x).
\]
Since the roots of $\Psi_k(x)$ are assumed to be distinct, we can write
\[
\Psi_k(x)
=
c\prod_{a=1}^{k-1}(x-\gamma_a)
\]
with some constant $c$.
Thus
\[
\Phi_k(x)
=
(2-x)c\prod_{a=1}^{k-1}(x-\gamma_a).
\]

Hence the poles of the rational function
\[
\frac{2k}{\Phi_k(x)}
\]
are only
\[
x=2,\quad x=\gamma_1,\dots,\gamma_{k-1}.
\]
Moreover, all these poles are simple.
Therefore, by the standard partial fraction decomposition for rational functions with only simple poles, there exist constants $B,A_1,\dots,A_{k-1}$ such that
\[
\frac{2k}{\Phi_k(x)}
=
\frac{B}{2-x}
+
\sum_{a=1}^{k-1}
\frac{A_a}{\gamma_a-x}.
\]

First we determine $B$.
Multiplying both sides of the partial fraction decomposition 
by $2-x$, we obtain
\[
(2-x)\frac{2k}{\Phi_k(x)}
=
B
+
(2-x)\sum_{a=1}^{k-1}
\frac{A_a}{\gamma_a-x}.
\]
Taking the limit $x\to2$, 
we obtain 
\[
B=\frac{2k}{\Psi_k(2)}.
\]

Next we determine $A_a$.
Multiplying both sides of the partial fraction decomposition by $\gamma_a-x$, we obtain
\[
(\gamma_a-x)\frac{2k}{\Phi_k(x)}
=
(\gamma_a-x)\frac{B}{2-x}
+
A_a
+
(\gamma_a-x)
\sum_{\substack{b=1\\ b\neq a}}^{k-1}
\frac{A_b}{\gamma_b-x}.
\]
Taking the limit $x\to\gamma_a$, we obtain 
\[
A_a
=
-\frac{2k}{(2-\gamma_a)\Psi_k'(\gamma_a)}.
\]
\end{proof}

By Lemma \ref{lem51}, the rational function
\[
\frac{2k}{\Phi_k(x)}
\]
which appears in the spectral representation is decomposed into the term corresponding to $x=2$ and the correction terms corresponding to the roots $\gamma_a$ of $\Psi_k$.
We next evaluate each correction term by using Fourier analysis and periodization.

\begin{lemma} \label{lem53}
Let
\[
\gamma=\rho+\rho^{-1},
\qquad |\rho|<1.
\]
Then
\[
\sum_{j=1}^{N-1}
\frac{1-\cos(\ell\theta_j)}
{\gamma-2\cos\theta_j}
=
N
\frac{(1+\rho^N)-(\rho^\ell+\rho^{N-\ell})}
{(\rho^{-1}-\rho)(1-\rho^N)}
\]
holds, where
\[
\theta_j=\frac{2\pi j}{N}.
\]
\end{lemma}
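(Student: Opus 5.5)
The plan is to expand $1/(\gamma-2\cos\theta)$ as a Fourier series in $\theta$ and then periodize it over the $N$-th roots of unity. Since $|\rho|<1$ and $\gamma=\rho+\rho^{-1}$, we have the factorization
\[
\gamma-2\cos\theta=\rho^{-1}(1-\rho e^{i\theta})(1-\rho e^{-i\theta}).
\]
This gives the classical Poisson-kernel expansion
\[
\frac{1}{\gamma-2\cos\theta}=\frac{1}{\rho^{-1}-\rho}\sum_{m\in\mathbb{Z}}\rho^{|m|}e^{im\theta},
\]
which converges absolutely for all real $\theta$; it follows from a partial-fraction split of the product into two geometric series. For complex $\rho$ this is still valid, since $|\rho|<1$ guarantees that $\gamma-2\cos\theta\neq0$ on the real line.

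\textbf{Step 1: the full sum.} First I extend the sum to include $j=0$. The $j=0$ term vanishes because $1-\cos(0)=0$, so the sum over $j=1,\dots,N-1$ equals the sum over all $j\in\mathbb{Z}_N$. Write $1-\cos(\ell\theta_j)=1-\tfrac12(e^{i\ell\theta_j}+e^{-i\ell\theta_j})$ and substitute the Fourier series. Interchanging the finite sum with the absolutely convergent series, I use orthogonality: $\sum_{j=0}^{N-1}e^{i(m+s)\theta_j}$ equals $N$ if $m\equiv -s\pmod N$ and $0$ otherwise. This reduces everything to the periodized sums
\[
S(s):=\sum_{m\equiv s \ (N)}\rho^{|m|}.
\]

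\textbf{Step 2: evaluating $S(s)$.} For $0\le s\le N$, splitting into $m=s+nN$ with $n\ge0$ and $n<0$ gives
\[
S(s)=\frac{\rho^{s}+\rho^{N-s}}{1-\rho^N}.
\]
In particular $S(0)=(1+\rho^N)/(1-\rho^N)$. Since $\ell$ may be taken with $0\le \ell\le N$, we get $S(\ell)=S(-\ell)=(\rho^\ell+\rho^{N-\ell})/(1-\rho^N)$.

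\textbf{Step 3: assembling.} The sum then equals
\[
\frac{N}{\rho^{-1}-\rho}\Bigl(S(0)-\tfrac12 S(\ell)-\tfrac12 S(-\ell)\Bigr),
\]
which is exactly the claimed right-hand side.

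The only delicate point is justifying the Fourier expansion and the interchange of summation when $\rho$ is complex, as will happen for $k=3$. I would handle this by noting that absolute convergence follows from $|\rho|<1$. I would also verify the partial-fraction identity
\[
\frac{1}{(1-\rho e^{i\theta})(1-\rho e^{-i\theta})}=\frac{1}{1-\rho^2}\left(\frac{1}{1-\rho e^{i\theta}}+\frac{\rho e^{-i\theta}}{1-\rho e^{-i\theta}}\right)
\]
directly, since this is what produces the normalizing factor $\rho^{-1}-\rho$.
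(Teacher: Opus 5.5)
Your proof is correct and is essentially the paper's argument run from the frequency side. The paper starts from the lattice kernel $g(m)=\rho^{|m|}/(\rho^{-1}-\rho)$ and periodizes it, which gives exactly your geometric sums $S(s)$, and then appeals to discrete Fourier inversion; your Poisson-kernel expansion plus orthogonality proves that inversion identity directly.

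One point in your favour: splitting $\cos(\ell\theta_j)$ into $e^{\pm i\ell\theta_j}$ and using $S(\ell)=S(-\ell)$ remains valid for complex $\rho$, which is needed for $k=3$. The paper's step ``the left-hand side is real, so take real parts'' only works for real $\rho$, and should instead use the symmetry $j\mapsto N-j$.
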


\begin{proof}
Let 
\[
g(m)=\frac{\rho^{|m|}}{\rho^{-1}-\rho}.
\]
This $g$ satisfies 
\[
(\gamma-S)g=\delta_0.
\]
Here
\[
(Sf)(m)=f(m+1)+f(m-1),
\]
and $\delta_0$ is defined by
\[
\delta_0(m)
=
\begin{cases}
1 & (m=0),\\
0 & (m\neq 0).
\end{cases}
\]

Then we periodize $g$ and put
\[
G(m)=\sum_{q\in\mathbb{Z}}g(m+qN).
\]
Since $|\rho|<1$, this sum converges absolutely.

For $0\le m\le N$, we compute $G(m)$ explicitly.
By definition,
\[
G(m)
=
\sum_{q\in\mathbb{Z}}
\frac{\rho^{|m+qN|}}{\rho^{-1}-\rho}
=
\frac{1}{\rho^{-1}-\rho}
\sum_{q\in\mathbb{Z}}\rho^{|m+qN|}.
\]
We divide the sum into the parts $q\ge0$ and $q\le -1$:
\[
\sum_{q\in\mathbb{Z}}\rho^{|m+qN|}
=
\sum_{q=0}^{\infty}\rho^{m+qN}
+
\sum_{q=1}^{\infty}\rho^{qN-m}.
\]
Here we used
\[
|m-qN|=qN-m
\]
for $0\le m\le N$ and $q\ge1$.

Hence
\[
G(m)
=
\frac{1}{\rho^{-1}-\rho}
\left(
\sum_{q=0}^{\infty}\rho^{m+qN}
+
\sum_{q=1}^{\infty}\rho^{qN-m}
\right).
\]
Each sum is a geometric series. Thus
\[
\sum_{q=0}^{\infty}\rho^{m+qN}
=
\rho^m\sum_{q=0}^{\infty}(\rho^N)^q
=
\frac{\rho^m}{1-\rho^N},
\]
and
\[
\sum_{q=1}^{\infty}\rho^{qN-m}
=
\rho^{N-m}\sum_{q=0}^{\infty}(\rho^N)^q
=
\frac{\rho^{N-m}}{1-\rho^N}.
\]
Therefore
\[
G(m)
=
\frac{1}{\rho^{-1}-\rho}
\cdot
\frac{\rho^m+\rho^{N-m}}{1-\rho^N}
=
\frac{\rho^m+\rho^{N-m}}
{(\rho^{-1}-\rho)(1-\rho^N)}.
\]

On the other hand, $G(m)$ admits the Fourier representation
\[
G(m)
=
\frac1N
\sum_{j=0}^{N-1}
\frac{e^{im\theta_j}}
{\gamma-2\cos\theta_j}
\]
by discrete Fourier inversion.
It follows that
\[
N\{G(0)-G(\ell)\}
=
\sum_{j=0}^{N-1}
\frac{1-e^{i\ell\theta_j}}
{\gamma-2\cos\theta_j}.
\]
Since the left-hand side is real, taking the real part gives
\[
N\{G(0)-G(\ell)\}
=
\sum_{j=0}^{N-1}
\frac{1-\cos(\ell\theta_j)}
{\gamma-2\cos\theta_j}.
\]
The term for $j=0$ is equal to $0$, since the numerator is $0$.
Hence
\[
\sum_{j=1}^{N-1}
\frac{1-\cos(\ell\theta_j)}
{\gamma-2\cos\theta_j}
=
N\{G(0)-G(\ell)\}.
\]

Finally, substituting
\[
G(0)
=
\frac{1+\rho^N}
{(\rho^{-1}-\rho)(1-\rho^N)}
\]
and
\[
G(\ell)
=
\frac{\rho^\ell+\rho^{N-\ell}}
{(\rho^{-1}-\rho)(1-\rho^N)}
\]
into the above equality, we obtain
\[
\sum_{j=1}^{N-1}
\frac{1-\cos(\ell\theta_j)}
{\gamma-2\cos\theta_j}
=
N
\frac{(1+\rho^N)-(\rho^\ell+\rho^{N-\ell})}
{(\rho^{-1}-\rho)(1-\rho^N)}.
\]
\end{proof}

\begin{lemma} \label{lem54}
For each $a=1,\dots,k-1$, let $\rho_a$ satisfy
\[
\rho_a+\rho_a^{-1}=\gamma_a,
\qquad |\rho_a|<1.
\]
Define
\[
V_n^{(a)}
:=
\frac{\rho_a^n-\rho_a^{-n}}
{\rho_a-\rho_a^{-1}}.
\]
Then
\[
V_{n+1}^{(a)}
=
\gamma_aV_n^{(a)}
-
V_{n-1}^{(a)}
\]
holds.
\end{lemma}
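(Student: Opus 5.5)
The plan is to verify the recurrence directly from the closed form of $V_n^{(a)}$. The key observation is that the numerator $\rho_a^n-\rho_a^{-n}$ is a linear combination of the two geometric sequences $\rho_a^n$ and $\rho_a^{-n}$. Each of these solves the recurrence, precisely because $\rho_a$ and $\rho_a^{-1}$ are the two roots of $t^2-\gamma_a t+1=0$.

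First, we check that $V_n^{(a)}$ is well defined, i.e.\ that $\rho_a-\rho_a^{-1}\neq 0$. This requires $\rho_a^2\neq 1$, which follows from $|\rho_a|<1$. We also note that $\rho_a\neq 0$, which is implicit in $\rho_a^{-1}$ being defined. The existence of such a $\rho_a$ for each $\gamma_a$ is part of the hypothesis: one takes the root of $t^2-\gamma_a t+1=0$ of smaller modulus. Here we will remark that $|\rho_a|=1$ is impossible. Indeed, it would force $\gamma_a=2\cos\theta\in[-2,2]$, so $\Phi_k(\gamma_a)=0$ at a point $x=2\cos\theta$ with $\theta\neq 0$. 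But $\Phi_k(2\cos\theta)=2\sum_{r=1}^k(1-\cos r\theta)$, which vanishes only when $\cos r\theta=1$ for all $r$, in particular $\cos\theta=1$, i.e.\ $x=2$. The root $x=2$ is already accounted for by the factor $2-x$ in Lemma~\ref{lem32}. Strictly, though, this point is only needed to justify the hypothesis, not the recurrence itself.

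Second, we compute for $s\in\{\rho_a,\rho_a^{-1}\}$:
\[
s^{n+1}-\gamma_a s^n+s^{n-1}=s^{n-1}\bigl(s^2-(\rho_a+\rho_a^{-1})s+1\bigr)=s^{n-1}(s-\rho_a)(s-\rho_a^{-1})=0.
\]
Hence both $\rho_a^n$ and $\rho_a^{-n}$ satisfy $u_{n+1}=\gamma_a u_n-u_{n-1}$. By linearity, so does their difference, and dividing by the nonzero constant $\rho_a-\rho_a^{-1}$ gives the stated recurrence for $V_n^{(a)}$, valid for all integers $n$.

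There is no serious obstacle. The only point needing care is the nondegeneracy $\rho_a\neq\pm1$ (equivalently $\gamma_a\neq\pm2$), which is guaranteed by $|\rho_a|<1$. As a sanity check, we can also record the initial values $V_0^{(a)}=0$ and $V_1^{(a)}=1$. These show that $V_n^{(a)}$ is the Fibonacci-type (Lucas-sequence) solution of the recurrence, which is the form used in the main formula.
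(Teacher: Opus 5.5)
Your proof is correct and is essentially the paper's argument: the paper only says the recurrence follows from the definition of $V_n^{(a)}$ and $\rho_a+\rho_a^{-1}=\gamma_a$ by direct calculation, and your observation is that $\rho_a^{\pm n}$ both solve $u_{n+1}=\gamma_a u_n-u_{n-1}$ because $\rho_a^{\pm1}$ are the roots of $t^2-\gamma_a t+1$. One small point in your side remark, which is not needed for the lemma: ruling out $\gamma_a=2$ requires that $x=2$ be a simple root of $\Phi_k$, i.e.\ $\Psi_k(2)=k(k+1)(2k+1)/6\neq 0$ as computed in the proof of Theorem~\ref{thm51}, not merely that $\Phi_k$ has the factor $2-x$.
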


\begin{proof}
The recurrence relation follows from the definition of 
$V_{n}^{(a)}$ 
and the identity
$\rho_a+\rho_a^{-1}=\gamma_a$ 
by direct calculation.
\end{proof}

\begin{theorem} \label{thm51}
For the simple random walk on the cycle power graph $C_N^k$, the average hitting time is given by
\[
h(0,\ell)
=
\frac{6}{(k+1)(2k+1)}\,\ell(N-\ell)
+
N\sum_{a=1}^{k-1}
A_a
\frac{
V_\ell^{(a)}V_{N-\ell}^{(a)}
}{
V_N^{(a)}
}.
\]
Here
\[
A_a
=
-\frac{2k}{(2-\gamma_a)\Psi_k'(\gamma_a)},
\]
and
\[
V_n^{(a)}
=
\frac{\rho_a^n-\rho_a^{-n}}
{\rho_a-\rho_a^{-1}},
\qquad
\rho_a+\rho_a^{-1}=\gamma_a,
\quad |\rho_a|<1.
\]
\end{theorem}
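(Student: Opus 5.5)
We start from Proposition \ref{prop41} and write its denominator as $\Phi_k$. With $x_j=2\cos\theta_j$ and $\theta_j=2\pi j/N$, we have
\[
2k\Bigl(1-\tfrac1k\sum_{r=1}^k\cos(r\theta_j)\Bigr)=\Phi_k(x_j).
\]
Hence
\[
h(0,\ell)=\sum_{j=1}^{N-1}\bigl(1-\cos(\ell\theta_j)\bigr)\frac{2k}{\Phi_k(x_j)}.
\]
Applying Lemma \ref{lem51} termwise splits $h(0,\ell)$ into two pieces. The main piece is $B\sum_{j}(1-\cos\ell\theta_j)/(2-2\cos\theta_j)$. The correction pieces are $\sum_a A_a\sum_j(1-\cos\ell\theta_j)/(\gamma_a-2\cos\theta_j)$. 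The partial fraction identity is an identity of rational functions, so it is valid at every $x_j$ with $j\ne0$; at those points $\Phi_k(x_j)=\lambda_j\neq0$ by Lemma \ref{lem41}, and we also need $x_j\neq\gamma_a$.

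For the main term, the sum is the $k=1$ case of Proposition \ref{prop41}, namely the cycle hitting time $\ell(N-\ell)$. We can either quote it or obtain it from Lemma \ref{lem53} in the limit $\rho\to1$. It remains to compute $B=2k/\Psi_k(2)$. Since $\Phi_k(2)=0$, we have $\Psi_k(2)=-\Phi_k'(2)$. Using $\tfrac{d}{dx}P_r(x)=U_{r-1}(x/2)$ and $U_{r-1}(1)=r$ for the Chebyshev polynomials of the second kind, we get $\Phi_k'(2)=-\sum_{r=1}^k r^2$. Equivalently, expanding $2-2\cos r\theta\approx r^2\theta^2$ near $\theta=0$ gives
\[
\Psi_k(2)=\sum_{r=1}^k r^2=\frac{k(k+1)(2k+1)}{6}.
\]
Hence $B=6/((k+1)(2k+1))$, which produces the quadratic term.

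For each correction term, we apply Lemma \ref{lem53} with $\gamma=\gamma_a$ and $\rho=\rho_a$. This requires $|\rho_a|<1$, that is, $\gamma_a\notin[-2,2]$. This is the step I expect to be the main obstacle. If some $\gamma_a$ lay in $[-2,2]$, then $\gamma_a=2\cos\theta$ with $\Phi_k(2\cos\theta)=2\sum_r(1-\cos r\theta)$. Since $\Psi_k(\gamma_a)=0$ we would have $\Phi_k(\gamma_a)=0$, which forces $\cos r\theta=1$ for all $r$, so $\theta=0$ and $\gamma_a=2$. Then $2$ would be a double root of $\Phi_k$, contradicting $\Psi_k(2)>0$. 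So every $\gamma_a$ is real with $|\gamma_a|>2$, or nonreal, and in each case one can choose $\rho_a$ with $|\rho_a|<1$. The same argument gives $x_j\neq\gamma_a$. Lemma \ref{lem53} is proved by the periodization argument, which is valid for complex $\rho$ with $|\rho|<1$, so nonreal roots cause no difficulty.

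The final step is the algebraic identity
\[
\frac{(1+\rho^N)-(\rho^\ell+\rho^{N-\ell})}{(\rho^{-1}-\rho)(1-\rho^N)}=\frac{V_\ell V_{N-\ell}}{V_N}.
\]
The numerator factors as $(1-\rho^\ell)(1-\rho^{N-\ell})$. Multiplying through by $\rho^{-N/2}$-type factors turns each factor into the form $\rho^{-n/2}-\rho^{n/2}$, which is proportional to $V_n$ as defined in Lemma \ref{lem54}. After this, the prefactors $(\rho-\rho^{-1})$ cancel to exactly $V_\ell V_{N-\ell}/V_N$. Summing over $a$ with the weights $A_a$ and adding the quadratic term then gives the theorem.
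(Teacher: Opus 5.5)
The step you flagged as the main obstacle is fine; the argument breaks at the last step instead. Your route is the paper's: spectral formula, partial fractions, the cycle sum for the $B$-term, Lemma~\ref{lem53} for each $\gamma_a$, then rewriting via $V^{(a)}$. Your argument that no $\gamma_a$ lies in $[-2,2]$, so that $\rho_a$ with $|\rho_a|<1$ exists and $x_j\neq\gamma_a$, fills a point the paper passes over.

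The last step fails because the claim that $\rho^{-n/2}-\rho^{n/2}$ is proportional to $V_n$ is false. $V_n$ is built from $\rho^{\pm n}$, and $\rho^{-n}-\rho^{n}=(\rho^{-n/2}-\rho^{n/2})(\rho^{-n/2}+\rho^{n/2})$. The $n$-dependent factors $\rho^{-n/2}+\rho^{n/2}$ do not cancel in $V_\ell V_{N-\ell}/V_N$. Doing the computation honestly gives
\[
\frac{V_\ell V_{N-\ell}}{V_N}=\frac{(1-\rho^{2\ell})(1-\rho^{2(N-\ell)})}{(\rho^{-1}-\rho)(1-\rho^{2N})},
\]
which is not the expression from Lemma~\ref{lem53}. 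The paper's proof asserts the same identity ``by direct calculation'', so the defect is in the statement, not only in your write-up.

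Concretely, take $k=2$, $N=5$, $\ell=1$. Then $C_5^2=K_5$ and $h(0,1)=4$. Here $\gamma=-3$, $A=-\frac45$, $V_1=1$, $V_4=-21$, $V_5=55$. The stated formula gives $\frac85+\frac{84}{55}=\frac{172}{55}\neq4$. The ratio from Lemma~\ref{lem53} is $-\frac35$, which gives the correct $\frac85+\frac{12}{5}=4$.

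What the argument actually proves is the theorem with $V_\ell^{(a)}V_{N-\ell}^{(a)}/V_N^{(a)}$ replaced by $\frac{(1-\rho_a^\ell)(1-\rho_a^{N-\ell})}{(\rho_a^{-1}-\rho_a)(1-\rho_a^N)}$. To keep a second-order recurrence, pick $\sigma_a$ with $\sigma_a^2=\rho_a$ and set $\widetilde V_n^{(a)}=(\sigma_a^n-\sigma_a^{-n})/(\rho_a-\rho_a^{-1})$. Then:
\begin{itemize}
\item $\widetilde V_\ell^{(a)}\widetilde V_{N-\ell}^{(a)}/\widetilde V_N^{(a)}$ is exactly that ratio;
\item $\widetilde V_{n+1}^{(a)}=(\sigma_a+\sigma_a^{-1})\widetilde V_n^{(a)}-\widetilde V_{n-1}^{(a)}$, with $(\sigma_a+\sigma_a^{-1})^2=\gamma_a+2$;
\item $V_m^{(a)}=\widetilde V_{2m}^{(a)}$, which is the index doubling behind the error.
\end{itemize}
For $k=2$ this yields $h(0,\ell)=\frac25\ell(N-\ell)+\frac45N\,F_\ell F_{N-\ell}/F_N$.

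Separately, two slips in your main term cancel each other:
\begin{itemize}
\item $B=2k/\Psi_k(2)=12/((k+1)(2k+1))$, not $6/((k+1)(2k+1))$.
\item $\sum_j(1-\cos\ell\theta_j)/(2-2\cos\theta_j)=\frac12\ell(N-\ell)$, because the $k=1$ case of Proposition~\ref{prop41} has denominator $1-\cos\theta_j$.
\end{itemize}
Also, $P_r'(x)=rU_{r-1}(x/2)$, not $U_{r-1}(x/2)$; your $r^2\theta^2$ expansion is the correct justification of $\Psi_k(2)=\sum r^2$. Finally, for nonreal $\gamma_a$ the ``take real parts'' step in the proof of Lemma~\ref{lem53} is not available. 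Replace it with the evenness $G(\ell)=G(-\ell)$, which gives the cosine form directly.
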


\begin{proof}
By the spectral representation in Proposition \ref{prop41}, we have
\[
h(0,\ell)
=
2k\sum_{j=1}^{N-1}
\frac{1-\cos(\ell\theta_j)}
{\Phi_k(2\cos\theta_j)}.
\]

By Lemma \ref{lem32},
\[
\Phi_k(x)=(2-x)\Psi_k(x).
\]
By Lemma \ref{lem51}, we have the partial fraction decomposition
\[
\frac{2k}{\Phi_k(x)}
=
\frac{B}{2-x}
+
\sum_{a=1}^{k-1}
\frac{A_a}{\gamma_a-x},
\]
where 
\[
B=\frac{2k}{\Psi_k(2)},
\qquad
A_a
=
-\frac{2k}{(2-\gamma_a)\Psi_k'(\gamma_a)}.
\]

We first compute $B$ explicitly.
By the proof of Lemma \ref{lem32}, we have
\[
\Phi_k(x)
=
2k-2\sum_{r=1}^k T_r(x/2).
\]
Therefore
\[
\Phi_k'(2)
=
-2\sum_{r=1}^k T_r'(1)\cdot\frac12
=
-\sum_{r=1}^k T_r'(1).
\]
Using
\[
T_r'(1)=r^2,
\]
we get
\[
\Phi_k'(2)
=
-\sum_{r=1}^k r^2
=
-\frac{k(k+1)(2k+1)}{6}.
\]
On the other hand, since
\[
\Phi_k(x)=(2-x)\Psi_k(x),
\]
we have
\[
\Phi_k'(2)=-\Psi_k(2).
\]
Thus
\[
\Psi_k(2)
=
\frac{k(k+1)(2k+1)}{6}.
\]
It follows that
\[
B
=
\frac{2k}{\Psi_k(2)}
=
\frac{12}{(k+1)(2k+1)}.
\]

Substituting the partial fraction decomposition into the spectral representation, we obtain
\[
h(0,\ell)
=
B
\sum_{j=1}^{N-1}
\frac{1-\cos(\ell\theta_j)}
{2-2\cos\theta_j}
+
\sum_{a=1}^{k-1}
A_a
\sum_{j=1}^{N-1}
\frac{1-\cos(\ell\theta_j)}
{\gamma_a-2\cos\theta_j}.
\]

For the first term, we use the standard identity
\[
\sum_{j=1}^{N-1}
\frac{1-\cos(\ell\theta_j)}
{2-2\cos\theta_j}
=
\frac12\,\ell(N-\ell).
\]
Thus
\[
B
\sum_{j=1}^{N-1}
\frac{1-\cos(\ell\theta_j)}
{2-2\cos\theta_j}
=
\frac{B}{2}\ell(N-\ell)
=
\frac{6}{(k+1)(2k+1)}\,\ell(N-\ell).
\]

For the second term, by Lemma \ref{lem53}, if
\[
\rho_a+\rho_a^{-1}=\gamma_a,
\qquad |\rho_a|<1,
\]
then
\[
\sum_{j=1}^{N-1}
\frac{1-\cos(\ell\theta_j)}
{\gamma_a-2\cos\theta_j}
=
N
\frac{(1+\rho_a^N)-(\rho_a^\ell+\rho_a^{N-\ell})}
{(\rho_a^{-1}-\rho_a)(1-\rho_a^N)}.
\]
Since
\[
(1+\rho_a^N)-(\rho_a^\ell+\rho_a^{N-\ell})
=
(1-\rho_a^\ell)(1-\rho_a^{N-\ell}),
\]
and by direct calculation,
\[
\frac{
V_\ell^{(a)}V_{N-\ell}^{(a)}
}{
V_N^{(a)}
}
=
\frac{
(1-\rho_a^\ell)(1-\rho_a^{N-\ell})
}{
(\rho_a^{-1}-\rho_a)(1-\rho_a^N)
},
\]
we obtain
\[
\sum_{j=1}^{N-1}
\frac{1-\cos(\ell\theta_j)}
{\gamma_a-2\cos\theta_j}
=
N
\frac{
V_\ell^{(a)}V_{N-\ell}^{(a)}
}{
V_N^{(a)}
}.
\]

Combining these formulas, we get
\[
h(0,\ell)
=
\frac{6}{(k+1)(2k+1)}\,\ell(N-\ell)
+
N\sum_{a=1}^{k-1}
A_a
\frac{
V_\ell^{(a)}V_{N-\ell}^{(a)}
}{
V_N^{(a)}
}.
\]
This completes the proof.
\end{proof}

\section{Applications of the Main Theorem}
\label{sec06}

In this section, by using the formula for the average hitting times obtained in Section \ref{sec05}, we derive formulas for the effective resistances, the numbers of spanning trees, the numbers of two-component spanning forests, and the numbers of spanning trees of vertex-identified graphs.


First, we discuss the relation with effective resistance.

\begin{theorem} \label{thm62}
The effective resistance between the vertices $0$ and $\ell$ in $C_N^k$ is given by
\[
R(0,\ell)
=
\frac{h(0,\ell)}{Nk}.
\]
In particular, by Theorem \ref{thm51}, we obtain
\[
R(0,\ell)
=
\frac{6}{Nk(k+1)(2k+1)}\,\ell(N-\ell)
+
\frac1k
\sum_{a=1}^{k-1}
A_a
\frac{
V_\ell^{(a)}V_{N-\ell}^{(a)}
}{
V_N^{(a)}
}.
\]
\end{theorem}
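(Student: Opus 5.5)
The plan is to relate $R(0,\ell)$ to hitting times through the commute time identity of Chandra et al.\ \cite{ChandraEtAl1989}, and then to remove the asymmetry using the symmetry of $C_N^k$. For a connected graph with $|E|$ edges and unit resistances,
\[
h(0,\ell)+h(\ell,0)=2|E|\,R(0,\ell).
\]
Under the standing assumption $N\ge 2k+1$, the graph $C_N^k$ is $2k$-regular on $N$ vertices. Hence $|E|=Nk$, and $2|E|=2Nk$.

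Next I will show that $h(0,\ell)=h(\ell,0)$. The map $x\mapsto \ell-x$ on $\mathbb{Z}_N$ is a graph automorphism of $C_N^k$, since it preserves cycle distances: $(\ell-i)-(\ell-j)=-(i-j)$, and the edge set is symmetric under $\pm r$. It swaps $0$ and $\ell$, and a graph automorphism transports the simple random walk to itself, so $h(0,\ell)=h(\ell,0)$. Alternatively, this is immediate from Proposition \ref{prop41}: the proof there gives $h(x,y)=2kN\{G(y,y)-G(x,y)\}$, the quantity $G(x,y)$ depends only on $x-y$, and the real form of the sum is even in $\ell$.

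Combining the two steps gives $2h(0,\ell)=2Nk\,R(0,\ell)$, that is, $R(0,\ell)=h(0,\ell)/(Nk)$. The displayed formula then follows by dividing the expression in Theorem \ref{thm51} by $Nk$. The quadratic term becomes $\frac{6}{Nk(k+1)(2k+1)}\ell(N-\ell)$, and the factor $N$ in front of the correction sum cancels, leaving $\frac1k\sum_a A_a V_\ell^{(a)}V_{N-\ell}^{(a)}/V_N^{(a)}$.

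If one prefers to avoid citing the commute time identity, there is a self-contained route. Express $R(0,\ell)=(\delta_0-\delta_\ell)^{T}L_k^{+}(\delta_0-\delta_\ell)$ through the Green function $G$ of Proposition \ref{prop41}, which satisfies $L_kG=\delta-\frac1N$. This gives $R(0,\ell)=2\{G(0,0)-G(0,\ell)\}$. Comparing with $h(0,\ell)=2kN\{G(0,0)-G(0,\ell)\}$ yields the same ratio $1/(Nk)$. The only real point requiring care is the edge count $|E|=Nk$, which uses $N\ge 2k+1$ so that the $2k$ neighbours $x\pm r$ are distinct. There is no substantial obstacle beyond that.
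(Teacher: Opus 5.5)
Your proposal is correct and follows essentially the same route as the paper: the commute time identity, the symmetry $h(0,\ell)=h(\ell,0)$, the edge count $m=Nk$, and then dividing Theorem \ref{thm51} by $Nk$. Your explicit reflection $x\mapsto \ell-x$, which swaps $0$ and $\ell$, justifies the symmetry more cleanly than the paper's bare appeal to vertex-transitivity. Your Green-function alternative $R(0,\ell)=2\{G(0,0)-G(0,\ell)\}$ is a valid self-contained check of the same ratio $1/(Nk)$.
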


\begin{proof}
By the commute time identity due to Chandra et al.~\cite{ChandraEtAl1989}, for any connected graph, we have
\[
h(u,v)+h(v,u)=2mR(u,v),
\]
where $m$ is the number of edges.

Since $C_N^k$ is vertex-transitive, we have
\[
h(u,v)=h(v,u).
\]
Thus
\[
2h(u,v)=2mR(u,v),
\]
and hence
\[
h(u,v)=mR(u,v).
\]
Since \(C_N^k\) is \(2k\)-regular,
the number of edges is
\[
m=Nk.
\]
Therefore
\[
R(0,\ell)
=
\frac{h(0,\ell)}{Nk}.
\]
Finally, substituting the formula in Theorem \ref{thm51}, we obtain
\[
R(0,\ell)
=
\frac{6}{Nk(k+1)(2k+1)}\,\ell(N-\ell)
+
\frac1k
\sum_{a=1}^{k-1}
A_a
\frac{
V_\ell^{(a)}V_{N-\ell}^{(a)}
}{
V_N^{(a)}
}.
\]
\end{proof}


Next, we compute the number of spanning trees.
For a graph $G$, We denote by $\tau(G)$ the number of spanning trees of $G$.
\begin{theorem} \label{thm63}
The number of spanning trees of the cycle power graph $C_N^k$ is given by
\[
\tau(C_N^k)
=
\frac1N
\prod_{j=1}^{N-1}
\Phi_k\!\left(
2\cos\frac{2\pi j}{N}
\right).
\]
\end{theorem}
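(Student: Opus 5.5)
The plan is to apply Kirchhoff's matrix-tree theorem in its spectral form. For a connected graph on $N$ vertices, the number of spanning trees equals $\frac1N$ times the product of the nonzero Laplacian eigenvalues. The combinatorial Laplacian of $C_N^k$ is $D-A$, where $D=2kI$ because the graph is $2k$-regular when $N\ge 2k+1$. The adjacency operator is $A=\sum_{r=1}^k(T^r+T^{-r})$, since the neighbours of $i$ are exactly $i\pm1,\dots,i\pm k$, and these are $2k$ distinct vertices under the same hypothesis on $N$. Hence the graph Laplacian is exactly the operator $L_k=\widetilde{\Phi}_k(T)$ of Proposition \ref{prop:basic-equation}.

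The steps, in order, are as follows.

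\begin{enumerate}
\item Identify $L_k$ with the graph Laplacian of $C_N^k$, as above.
\item Invoke Lemma \ref{lem41}. The Fourier basis diagonalizes $L_k$ with eigenvalues
\[
\lambda_j=2k-2\sum_{r=1}^k\cos\frac{2\pi jr}{N},
\]
where $\lambda_0=0$ and $\lambda_j>0$ for $j\neq0$. In particular the zero eigenvalue is simple, which also reflects the connectedness of $C_N^k$.
\item Apply the eigenvalue version of the matrix-tree theorem. It follows from Kirchhoff's theorem: the coefficient of $t$ in $\det(tI-L_k)$ equals $(-1)^{N-1}N$ times any cofactor of $L_k$, and it also equals $(-1)^{N-1}\prod_{j\ne0}\lambda_j$. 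This gives
\[
\tau(C_N^k)=\frac1N\prod_{j=1}^{N-1}\lambda_j .
\]
\item Rewrite each eigenvalue through $\Phi_k$. Set $z=e^{2\pi i j/N}$, so that $x=z+z^{-1}=2\cos(2\pi j/N)$ and $z^r+z^{-r}=2\cos(2\pi jr/N)$. Then $\lambda_j=\widetilde{\Phi}_k(z)=\Phi_k\bigl(2\cos\frac{2\pi j}{N}\bigr)$, by the construction of $\Phi_k$ preceding Lemma \ref{lem32}. Substituting into step 3 yields the claimed formula.
\end{enumerate}

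No real obstacle is expected. The only points needing care are the identification of $L_k$ with the true graph Laplacian and the simplicity of the zero eigenvalue. The first requires that the $2k$ neighbours be distinct, which holds because $N\ge 2k+1$. The second is already recorded in Lemma \ref{lem41}.
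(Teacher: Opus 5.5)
Your proposal is correct and follows the paper's proof: Kirchhoff's matrix-tree theorem in eigenvalue form, the Fourier diagonalization of Lemma \ref{lem41}, and the identification $\lambda_j=\Phi_k\bigl(2\cos\frac{2\pi j}{N}\bigr)$. Your extra care points are sound details the paper leaves implicit: $L_k$ equals the true graph Laplacian only when $N\ge 2k+1$, and the eigenvalue form follows from the cofactor form via the coefficient of $t$ in $\det(tI-L_k)$. Your derivation in step 3 does not actually need the zero eigenvalue to be simple, since $\lambda_0=0$ already kills every other term of that coefficient.
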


\begin{proof}
By Kirchhoff's Matrix-Tree Theorem, the number of spanning trees of a connected graph $G$ is given by
\[
\tau(G)
=
\frac1{|V(G)|}
\prod_{j=1}^{|V(G)|-1}\lambda_j,
\]
where
\[
\lambda_1,\dots,\lambda_{|V(G)|-1}
\]
are the nonzero Laplacian eigenvalues of $G$ \cite{Kirchhoff1847}.

By Lemma \ref{lem41} and the identity
\[
\Phi_k(2\cos\theta)
=
2k-2\sum_{r=1}^k\cos(r\theta),
\]
we have
\[
\lambda_j
=
\Phi_k\left(2\cos\frac{2\pi j}{N}\right).
\]
Therefore
\[
\tau(C_N^k)
=
\frac1N
\prod_{j=1}^{N-1}
\Phi_k\!\left(
2\cos\frac{2\pi j}{N}
\right).
\]
\end{proof}


Next, by using the factorization obtained in Sections \ref{sec03} and \ref{sec05}, we rewrite the number of spanning trees in terms of the roots of $\Psi_k$.

\begin{lemma} \label{lem64}
We have
\[
\prod_{j=1}^{N-1}
\left(
2-2\cos\frac{2\pi j}{N}
\right)
=
N^2.
\]
\end{lemma}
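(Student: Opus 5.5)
The plan is to rewrite each factor through the $N$-th roots of unity and then evaluate a single polynomial. Put $\omega=e^{2\pi i/N}$. Then
\[
2-2\cos\frac{2\pi j}{N}=2-\omega^j-\omega^{-j}=(1-\omega^j)(1-\omega^{-j}).
\]
Taking the product over $j=1,\dots,N-1$ gives
\[
\prod_{j=1}^{N-1}\left(2-2\cos\frac{2\pi j}{N}\right)
=\prod_{j=1}^{N-1}(1-\omega^j)\cdot\prod_{j=1}^{N-1}(1-\omega^{-j}).
\]
As $j$ runs over $1,\dots,N-1$, the exponent $-j$ runs over the same residues modulo $N$. Hence the two products on the right are equal, and the left-hand side is $\bigl(\prod_{j=1}^{N-1}(1-\omega^j)\bigr)^2$.

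The key step is evaluating $Q:=\prod_{j=1}^{N-1}(1-\omega^j)$. The $N$-th roots of unity other than $1$ are exactly the roots of
\[
\frac{z^N-1}{z-1}=1+z+\cdots+z^{N-1},
\]
so $1+z+\cdots+z^{N-1}=\prod_{j=1}^{N-1}(z-\omega^j)$. Setting $z=1$ gives $Q=N$, and squaring yields $N^2$.

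No step here is a real obstacle. The only point needing care is the reindexing $j\mapsto N-j$, which shows the conjugate product equals $Q$; this also makes the result manifestly real and positive, consistent with each factor $2-2\cos(2\pi j/N)$ being positive for $1\le j\le N-1$.

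An alternative route would be to apply Theorem \ref{thm63} with $k=1$, where $\Phi_1(x)=2-x$ and $\tau(C_N)=N$, giving the identity immediately. However, that relies on the known count of spanning trees of a cycle, so I prefer the direct root-of-unity computation above.
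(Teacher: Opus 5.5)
Your proof is correct, but it is not the paper's argument. The paper proves the lemma by exactly the route you set aside. It reads the left-hand side as the product of the nonzero Laplacian eigenvalues of $C_N$ (Lemma~\ref{lem41} with $k=1$), and then combines Kirchhoff's Matrix-Tree Theorem with the combinatorial count $\tau(C_N)=N$.

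Inside the paper that is a one-liner, since the Matrix-Tree Theorem is already needed for Theorem~\ref{thm63}, and it fits the paper's theme of turning spectral products into counts. The price is that an elementary trigonometric identity is made to depend on a substantial theorem, plus a graph interpretation of the operator $2I-T-T^{-1}$.

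Your route is self-contained: you factor $2-2\cos(2\pi j/N)=(1-\omega^j)(1-\omega^{-j})$ and evaluate $1+z+\cdots+z^{N-1}=\prod_{j=1}^{N-1}(z-\omega^j)$ at $z=1$. Fed back into Kirchhoff's theorem, it even reproves $\tau(C_N)=N$.

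Your route also meshes better with what follows. The proof of Theorem~\ref{thm65} evaluates $\prod_{j}(z_j-\rho_a)$ with the same cyclotomic identity, and your computation is its degenerate case $\rho=1$ (that is, $\gamma=2$). So the factor $2-x$ and the factors of $\Psi_k$ are handled by one uniform method. Pairing $\omega^{j}$ with $\omega^{-j}$ directly, as you do, also keeps everything manifestly positive. By contrast, writing $x_j-\gamma=(z_j-\rho)(z_j-\rho^{-1})/z_j$ forces one to track signs such as $\prod_{j=1}^{N-1}z_j=(-1)^{N-1}$.
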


\begin{proof}
The left-hand side is the product of the nonzero Laplacian eigenvalues of the cycle graph $C_N$.

Since
\[
\tau(C_N)=N,
\]
Kirchhoff's Matrix-Tree Theorem gives
\[
N
=
\frac1N
\prod_{j=1}^{N-1}
\left(
2-2\cos\frac{2\pi j}{N}
\right).
\]
Hence
\[
\prod_{j=1}^{N-1}
\left(
2-2\cos\frac{2\pi j}{N}
\right)
=
N^2.
\]
\end{proof}

\begin{theorem} \label{thm65}
Let
\[
\gamma_1,\dots,\gamma_{k-1}
\]
be the roots of $\Psi_k(x)$, and take $\rho_a$ satisfying
\[
\rho_a+\rho_a^{-1}=\gamma_a,
\qquad
|\rho_a|<1.
\]
Then
\[
\tau(C_N^k)
=
N
\prod_{a=1}^{k-1}
\frac{(1-\rho_a^N)^2}
{\rho_a^{N-1}(1-\rho_a)^2}.
\]
\end{theorem}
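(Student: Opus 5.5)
The plan is to start from Theorem \ref{thm63}, split off the factor $2-x$ using Lemma \ref{lem32}, evaluate the resulting cyclotomic-type products over the roots $\gamma_a$ in closed form, and handle signs at the end. Put $\theta_j=2\pi j/N$. Since $\Phi_k(x)=(2-x)\Psi_k(x)$, Theorem \ref{thm63} gives
\[
\tau(C_N^k)=\frac1N\prod_{j=1}^{N-1}(2-2\cos\theta_j)\prod_{j=1}^{N-1}\Psi_k(2\cos\theta_j)
= N\prod_{j=1}^{N-1}\Psi_k(2\cos\theta_j),
\]
where the second equality is Lemma \ref{lem64}. It therefore remains to evaluate $\prod_{j}\Psi_k(2\cos\theta_j)$ in terms of the roots $\gamma_a$.

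\emph{Step 1: leading coefficient of $\Psi_k$.} In $\Phi_k(x)=2k-2\sum_{r\le k}T_r(x/2)$ only $r=k$ contributes to degree $k$. Since $T_k$ has leading coefficient $2^{k-1}$, the leading term of $-2T_k(x/2)$ is $-x^k$. Comparing with $(2-x)\Psi_k(x)$ shows that $\Psi_k$ is monic, so
\[
\Psi_k(x)=\prod_{a=1}^{k-1}(x-\gamma_a).
\]
The product then factors as $\prod_{a}\prod_{j=1}^{N-1}(2\cos\theta_j-\gamma_a)$, and it suffices to treat a single root $\gamma=\rho+\rho^{-1}$ with $|\rho|<1$.

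\emph{Step 2: the single-root product.} With $z=e^{i\theta}$ one has
\[
(1-\rho z)(1-\rho z^{-1})=1+\rho^2-2\rho\cos\theta ,
\]
hence
\[
2\cos\theta-\gamma=-\rho^{-1}(1-\rho e^{i\theta})(1-\rho e^{-i\theta}).
\]
Take the product over all $j=0,\dots,N-1$. Because $\prod_{j=0}^{N-1}(1-\rho\omega^j)=1-\rho^N$ for $\omega=e^{2\pi i/N}$, and the same holds for $\omega^{-1}$, the full product equals $(-1)^N\rho^{-N}(1-\rho^N)^2$. Dividing by the $j=0$ factor $-\rho^{-1}(1-\rho)^2$ gives
\[
\prod_{j=1}^{N-1}(2\cos\theta_j-\gamma)=(-1)^{N-1}\frac{(1-\rho^N)^2}{\rho^{N-1}(1-\rho)^2}.
\]
Multiplying over $a$ then yields the stated product, up to the overall factor $(-1)^{(N-1)(k-1)}$.

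\emph{Step 3: the sign, which is the main obstacle.} The algebra above is routine; the delicate point is the sign. The right-hand side of Step 2 is an algebraic identity, so the question is whether $(-1)^{(N-1)(k-1)}$ is compatible with the displayed formula. I would first check the case $k=2$. There $\Psi_2(x)=x+3$, so $\gamma_1=-3$ and $\rho_1=(-3+\sqrt5)/2<0$. Then $\rho_1^{N-1}$ carries the sign $(-1)^{N-1}$, which exactly cancels the factor from Step 2. This matches the requirement that each factor $\Psi_k(2\cos\theta_j)=\Phi_k(2\cos\theta_j)/(2-2\cos\theta_j)$ is positive.

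In general I would pair the roots: complex $\gamma_a$ occur in conjugate pairs, so their $\rho_a$ occur in conjugate pairs and each pair contributes a positive real factor; real roots $\gamma_a$ must satisfy $\gamma_a<-2$, because $\Psi_k$ has no zeros on $[-2,2]$ and $\Psi_k(2)>0$. For real roots the sign is absorbed as in $k=2$. If this bookkeeping does not close, the cleanest repair is to write the factor as $(-\rho_a)^{N-1}$, or to insert $(-1)^{(N-1)(k-1)}$. Positivity of $\tau$ then fixes which convention is correct, and I would confirm it against the known $k=2$ spanning-tree count.
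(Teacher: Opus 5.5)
Your Steps 1 and 2 follow the paper's route: Theorem \ref{thm63}, Lemma \ref{lem64}, monicity of $\Psi_k$, factoring $2\cos\theta_j-\gamma_a$ through $\rho_a$, and roots-of-unity products. Your single-root identity
\[
\prod_{j=1}^{N-1}(2\cos\theta_j-\gamma)=(-1)^{N-1}\frac{(1-\rho^N)^2}{\rho^{N-1}(1-\rho)^2}
\]
is correct, and it is more careful than the paper's proof. The paper's identities $\prod_{j=1}^{N-1}(z_j-\rho_a)=(1-\rho_a^N)/(1-\rho_a)$ and $\prod_{j=1}^{N-1}(z_j-\rho_a^{-1})=\rho_a^{1-N}(1-\rho_a^N)/(1-\rho_a)$ each omit a factor $(-1)^{N-1}$. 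It also replaces $\prod_{j=1}^{N-1}z_j=(-1)^{N-1}$ by $1$. (As an algebraic identity, already for $N=2$ the left side is $-2-\gamma=-(1+\rho)^2/\rho$, which matches your sign.) So your Step 2 proves exactly
\[
\tau(C_N^k)=(-1)^{(N-1)(k-1)}\,N\prod_{a=1}^{k-1}\frac{(1-\rho_a^N)^2}{\rho_a^{N-1}(1-\rho_a)^2}
=N\prod_{a=1}^{k-1}\frac{(1-\rho_a^N)^2}{(-\rho_a)^{N-1}(1-\rho_a)^2}.
\]

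The genuine problem is Step 3. The sign cannot be reconciled with the displayed formula, because the displayed formula is false whenever $k$ and $N$ are both even. Your $k=2$ observation shows exactly this, not compatibility. The cancellation of $(-1)^{N-1}$ against the sign of $\rho_1^{N-1}$ proves that the \emph{corrected} expression is positive. Hence the displayed one, which lacks $(-1)^{N-1}$, equals $-\tau(C_N^2)$ for even $N$.

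Concretely, take $k=2$, $N=6$. Then $N\prod_{j=1}^{5}(2\cos\theta_j+3)=6\cdot4\cdot2\cdot1\cdot2\cdot4=384=6F_6^2$. With $\rho_1=-\phi^{-2}$, the displayed right-hand side is $-6(\phi^6-\phi^{-6})^2/(\phi+\phi^{-1})^2=-384$.

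Your root-location remarks are right and confirm this. $\Psi_k>0$ on $[-2,\infty)$, so real roots satisfy $\gamma_a<-2$ and $\rho_a\in(-1,0)$. Non-real roots give factors $|\cdot|^2>0$, and the number of real roots is $\equiv k-1\pmod 2$. Together these show the displayed right-hand side has sign $(-1)^{(N-1)(k-1)}$.

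So drop the hedging (``if this bookkeeping does not close\dots'', ``positivity then fixes which convention''). The sign is forced by your exact algebra, not chosen by positivity, which is only a sanity check. State and prove the corrected identity with $(-\rho_a)^{N-1}$. Note that it agrees with the displayed version exactly when $(N-1)(k-1)$ is even, e.g.\ for odd $k$, so the paper's $k=3$ corollary is unaffected. Finally, check that for $k=2$ it gives the known count $NF_N^2$.
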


\begin{proof}
By Lemma \ref{lem32},
\[
\Phi_k(x)=(2-x)\Psi_k(x).
\]
Thus, by Theorem \ref{thm63},
\[
\tau(C_N^k)
=
\frac1N
\prod_{j=1}^{N-1}
(2-x_j)\Psi_k(x_j),
\qquad
x_j=2\cos\frac{2\pi j}{N}.
\]
By Lemma \ref{lem64},
\[
\prod_{j=1}^{N-1}(2-x_j)=N^2.
\]
Hence
\[
\tau(C_N^k)
=
N
\prod_{j=1}^{N-1}
\Psi_k(x_j).
\]

Since
\[
\Psi_k(x)
=
\prod_{a=1}^{k-1}(x-\gamma_a),
\]
we have
\[
\tau(C_N^k)
=
N
\prod_{a=1}^{k-1}
\prod_{j=1}^{N-1}
(x_j-\gamma_a).
\]

Now put
\[
x_j=z_j+z_j^{-1},
\qquad
z_j=e^{2\pi ij/N}.
\]
Then
\[
x_j-\gamma_a
=
z_j+z_j^{-1}
-
(\rho_a+\rho_a^{-1})
=
\frac{(z_j-\rho_a)(z_j-\rho_a^{-1})}{z_j}.
\]
Therefore
\[
\prod_{j=1}^{N-1}(x_j-\gamma_a)
=
\prod_{j=1}^{N-1}
\frac{(z_j-\rho_a)(z_j-\rho_a^{-1})}{z_j}.
\]

Using
\[
\prod_{j=1}^{N-1}(z_j-\rho_a)
=
\frac{1-\rho_a^N}{1-\rho_a}
\]
and
\[
\prod_{j=1}^{N-1}(z_j-\rho_a^{-1})
=
\rho_a^{1-N}
\frac{1-\rho_a^N}{1-\rho_a},
\]
we obtain
\[
\prod_{j=1}^{N-1}(x_j-\gamma_a)
=
\frac{(1-\rho_a^N)^2}
{\rho_a^{N-1}(1-\rho_a)^2}.
\]
Consequently,
\[
\tau(C_N^k)
=
N
\prod_{a=1}^{k-1}
\frac{(1-\rho_a^N)^2}
{\rho_a^{N-1}(1-\rho_a)^2}.
\]
\end{proof}


Finally, we discuss the relation with two-component spanning forests.

\begin{theorem} \label{thm66}
Let $F_{C_N^k}(0 \mid \ell)$ denote the number of two-component spanning forests of $C_N^k$ such that the vertices $0$ and $\ell$ belong to different connected components.
Then
\[
F_{C_N^k}(0 \mid \ell)
=
\tau(C_N^k)\frac{h(0,\ell)}{Nk}.
\]
In particular,
\[
F_{C_N^k}(0 \mid \ell)
=
\frac{\tau(C_N^k)}{Nk}
\left\{
\frac{6}{(k+1)(2k+1)}\,\ell(N-\ell)
+
N\sum_{a=1}^{k-1}
A_a
\frac{
V_\ell^{(a)}V_{N-\ell}^{(a)}
}{
V_N^{(a)}
}
\right\}.
\]
\end{theorem}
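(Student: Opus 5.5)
The plan is to combine the classical expression of effective resistance through two-component spanning forests with Theorem \ref{thm62}. For any connected graph $G$ with unit edge weights, the all-minors matrix-tree theorem (equivalently, Kirchhoff's formula for effective resistance) gives
\[
R(u,v)=\frac{F_G(u\mid v)}{\tau(G)},
\]
where $F_G(u\mid v)$ counts spanning forests with exactly two components, one containing $u$ and the other containing $v$.

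The argument has three steps.

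First, we justify this identity. Let $L$ be the Laplacian and $L^{(u)}$ the reduced Laplacian obtained by deleting the row and column of $u$. Then $\det L^{(u)}=\tau(G)$, which is the matrix-tree theorem already used in Theorem \ref{thm63}. Next, $R(u,v)=\bigl((L^{(u)})^{-1}\bigr)_{vv}$, since this is the potential at $v$ when unit current enters at $v$ and the vertex $u$ is grounded. By Cramer's rule this equals $\det L^{(u,v)}/\det L^{(u)}$, where $L^{(u,v)}$ deletes the rows and columns of both $u$ and $v$.

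Second, we identify $\det L^{(u,v)}$. It is the number of spanning trees of the graph obtained from $G$ by identifying $u$ and $v$; equivalently, by the all-minors matrix-tree theorem, it is the number of spanning forests of $G$ with two trees, rooted at $u$ and at $v$ respectively. This is exactly $F_G(u\mid v)$. The main care needed is this bijection: a spanning tree of the identified graph pulls back to a two-component forest of $G$ separating $u$ and $v$, and conversely. The identification may create parallel edges, which the multigraph version of the matrix-tree theorem handles.

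Third, we specialize to $G=C_N^k$, $u=0$, $v=\ell$. Theorem \ref{thm62} gives $R(0,\ell)=h(0,\ell)/(Nk)$, so
\[
F_{C_N^k}(0\mid \ell)=\tau(C_N^k)\,R(0,\ell)=\tau(C_N^k)\frac{h(0,\ell)}{Nk}.
\]
Substituting the formula of Theorem \ref{thm51} for $h(0,\ell)$ then yields the displayed explicit expression.

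Once the forest interpretation of the $2\times2$-deleted minor is granted, as in \cite{ChebotarevShamis1997} or Kirchhoff \cite{Kirchhoff1847}, everything else is substitution.
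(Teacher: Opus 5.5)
Your proposal is correct and follows the same route as the paper: combine the identity $F_G(u\mid v)=\tau(G)R(u,v)$ with Theorem~\ref{thm62}, then substitute Theorem~\ref{thm51}. The only difference is that the paper just cites this identity from \cite{ChebotarevShamis1997}, whereas you also sketch its derivation via the reduced Laplacian, Cramer's rule and the contraction $G/(u\sim v)$; there, an edge $uv$ (present when $\ell\le k$ or $N-\ell\le k$) becomes a loop and should be discarded, which is harmless because it only affects the deleted rows and columns.
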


\begin{proof}
By the matrix-forest theorem, for any connected graph, we have
\[
F_G(u \mid v)=\tau(G)R(u,v)
\]
\cite{ChebotarevShamis1997}.
By Theorem \ref{thm62},
\[
R(0,\ell)=\frac{h(0,\ell)}{Nk}.
\]
Hence
\[
F_{C_N^k}(0 \mid \ell)
=
\tau(C_N^k)\frac{h(0,\ell)}{Nk}.
\]
Finally, substituting the formula in Theorem \ref{thm51}, we obtain
\[
F_{C_N^k}(0 \mid \ell)
=
\frac{\tau(C_N^k)}{Nk}
\left\{
\frac{6}{(k+1)(2k+1)}\,\ell(N-\ell)
+
N\sum_{a=1}^{k-1}
A_a
\frac{
V_\ell^{(a)}V_{N-\ell}^{(a)}
}{
V_N^{(a)}
}
\right\}.
\]
\end{proof}


\begin{corollary} \label{cor67}
Let $C_N^k/(0\sim \ell)$ be the graph obtained from $C_N^k$ by identifying the vertices $0$ and $\ell$.
Then
\[
\tau(C_N^k/(0\sim \ell))
=
\tau(C_N^k)\frac{h(0,\ell)}{Nk}.
\]
In particular,
\[
\tau(C_N^k/(0\sim \ell))
=
\frac{\tau(C_N^k)}{Nk}
\left\{
\frac{6}{(k+1)(2k+1)}\,\ell(N-\ell)
+
N\sum_{a=1}^{k-1}
A_a
\frac{
V_\ell^{(a)}V_{N-\ell}^{(a)}
}{
V_N^{(a)}
}
\right\}.
\]
\end{corollary}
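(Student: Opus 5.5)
The plan is to reduce Corollary \ref{cor67} to Theorem \ref{thm66} by a bijection between spanning trees of the vertex-identified graph $C_N^k/(0\sim\ell)$ and two-component spanning forests of $C_N^k$ separating $0$ and $\ell$. We regard the identification as producing a multigraph. Edges of $C_N^k$ correspond bijectively to edges of $C_N^k/(0\sim\ell)$. There are two points to note. First, an edge $\{0,\ell\}$ (present when $\ell$ or $N-\ell$ is at most $k$) becomes a loop. Second, pairs of edges $\{0,v\},\{\ell,v\}$ become parallel edges. We keep the loop and the parallel edges, so that the edge correspondence is exact. A loop never lies in a spanning tree, so keeping it does not change $\tau$.

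Step 1 establishes the bijection. Take a forest $F$ of $C_N^k$ with two components, one containing $0$ and the other containing $\ell$. It has $N-2$ edges, and neither component contains both $0$ and $\ell$. Its image in the identified graph has $N-2=(N-1)-1$ edges on $N-1$ vertices. The image is connected, since the two components are glued at the merged vertex. It is acyclic, because a cycle in the image either lifts to a cycle in $F$ or to a path in $F$ from $0$ to $\ell$, and neither exists. So the image is a spanning tree.

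Conversely, lift a spanning tree of $C_N^k/(0\sim\ell)$ edge-by-edge to $C_N^k$. The lift has $N-2$ edges. It contains no cycle, since a cycle would project to a closed walk using distinct edges, that is, a cycle. It also contains no $0$–$\ell$ path, since such a path would project to a cycle through the merged vertex. Hence the lift is an acyclic graph with $N-2$ edges on $N$ vertices, so it has exactly two components, and these separate $0$ from $\ell$. The two maps are mutually inverse, which gives
\[
\tau(C_N^k/(0\sim\ell))=F_{C_N^k}(0\mid\ell).
\]

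Step 2 substitutes Theorem \ref{thm66}, which yields both displayed formulas at once: the first from the identity $F=\tau h/(Nk)$, and the second from the explicit formula of Theorem \ref{thm51}.

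The only delicate point is the convention for the identified graph. The bijection requires that parallel edges be retained. If one simplified the graph instead, the count would change whenever $\ell\le k$ or $N-\ell\le k$, since then some vertex $v$ is adjacent to both $0$ and $\ell$. I would therefore state explicitly that $C_N^k/(0\sim\ell)$ is taken as a multigraph, with loops discarded or kept, which is harmless. This is the standard convention under which the statement is true.
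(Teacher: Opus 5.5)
Your proof is correct, and its outer structure matches the paper's. Both reduce the corollary to the identity $\tau(C_N^k/(0\sim\ell))=F_{C_N^k}(0\mid\ell)$ and then substitute Theorem~\ref{thm66}. The difference is how that identity is justified.

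The paper simply cites Chaiken's all-minors Matrix-Tree Theorem. In effect, both sides equal the principal minor of $L_k$ obtained by deleting the rows and columns indexed by $0$ and $\ell$. You instead prove the identity with an explicit edge-by-edge bijection between two-component forests separating $0$ and $\ell$ and spanning trees of the identified graph.

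The citation is shorter and places the identity in its natural algebraic setting. Your argument is self-contained and elementary, and it has a real advantage: it brings the convention into the open. The identity holds only if $C_N^k/(0\sim\ell)$ is treated as a multigraph with parallel edges retained (loops are irrelevant), a point the paper leaves implicit. The same convention is also what the algebraic proof needs: the reduced Laplacian of the contracted graph coincides with the submatrix of $L_k$ on the vertices other than $0$ and $\ell$ only when parallel edges are kept.

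Two small corrections to your write-up.

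\begin{enumerate}
\item In the converse direction, a closed walk with distinct edges need not itself be a cycle. It does contain one, which is all you need.
\item Your closing remark misidentifies when simplification matters. The condition ``$\ell\le k$ or $N-\ell\le k$'' is the condition for $\{0,\ell\}$ to be an edge, and that only produces a harmless loop. Parallel edges arise whenever $0$ and $\ell$ have a common neighbour, which happens for every $\ell$ with $2\le\min(\ell,N-\ell)\le 2k$. For example, when $k=2$ and $\ell=4$, vertex $2$ is adjacent to both. Conversely, for $k=1$, $\ell=1$, $N\ge 4$ only a loop appears and the count is unchanged.
\end{enumerate}

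Neither point affects your proof, since you keep all parallel edges.
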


\begin{proof}
By Chaiken's all-minors Matrix-Tree Theorem, we have
\[
\tau(G/(u\sim v))
=
F_G(u\mid v)
\]
\cite{Chaiken1982}.
Thus, by Theorem \ref{thm66},
\[
\tau(C_N^k/(0\sim \ell))
=
F_{C_N^k}(0\mid \ell)
=
\tau(C_N^k)\frac{h(0,\ell)}{Nk}.
\]
The last formula follows by substituting the explicit expression in Theorem \ref{thm66}.
\end{proof}

\subsection{Examples} \label{sec07}

In this section, we describe the cases $k=1,2,3$ explicitly by using the main theorem.
In particular, Fibonacci numbers appear in the case $k=2$, and complex conjugate second-order linear recurrence sequences appear in the case $k=3$.

\subsection{The case $k=1$}

For $k=1$, we have $C_N^1=C_N$.
In this case,
\[
\Psi_1(x)=1.
\]
Hence there is no root, and no correction term appears in the main theorem.

Therefore
\[
h(0,\ell)
=
\frac{6}{(1+1)(2\cdot1+1)}\,\ell(N-\ell)
=
\ell(N-\ell).
\]

Thus we recover the classical formula
\[
h(0,\ell)=\ell(N-\ell).
\]

\subsection{The case $k=2$}

For $k=2$, we have
\[
P_1(x)=x,\qquad P_2(x)=x^2-2.
\]
Thus
\begin{align*}
\Phi_2(x)
&=4-\{x+(x^2-2)\} \\
&=(2-x)(x+3).
\end{align*}
Therefore
$
\Psi_2(x)=x+3,
$
and its root is
$
\gamma=-3.
$
In this case, the solution of
\[
\rho+\rho^{-1}=-3
\]
satisfying $|\rho|<1$ is
\[
\rho=\frac{-3+\sqrt5}{2}.
\]
Moreover, put
\[
V_n
=
\frac{\rho^n-\rho^{-n}}
{\rho-\rho^{-1}}.
\]
Then, by Lemma \ref{lem54}, it satisfies
\[
V_{n+1}
=
-3V_n-V_{n-1}.
\]
By Theorem \ref{thm51}, we obtain
\[
h(0,\ell)
=
\frac{2}{5}\,\ell(N-\ell)
-
\frac{4}{5}\,N
\frac{V_{\ell}V_{N-\ell}}{V_N}.
\]
Since
\[
\rho=-\phi^{-2},
\qquad
\phi=\frac{1+\sqrt5}{2},
\]
Binet's formula gives
\[
V_n=(-1)^{n+1}F_{2n}.
\]
Therefore
\[
\frac{V_\ell V_{N-\ell}}{V_N}
=
-\frac{F_{2\ell}F_{2(N-\ell)}}{F_{2N}}.
\]
It follows that
\[
h(0,\ell)
=
\frac{2}{5}\,\ell(N-\ell)
+
\frac{4}{5}\,N
\frac{F_{2\ell}F_{2(N-\ell)}}{F_{2N}}.
\]

This agrees with the formula for the average hitting times on the square of a cycle graph obtained by Doi et al.~\cite{DoiEtAl2022}.

\subsection{The case $k=3$} \label{subsec07}

In this subsection, we explicitly describe the average hitting times, effective resistances, numbers of spanning trees, numbers of two-component spanning forests, and numbers of spanning trees of vertex-identified graphs for the third power graph $C_N^3$ of a cycle.
Throughout this subsection, we assume that $N\ge 7$.

For $k=3$, we have
\[
P_1(x)=x,\qquad
P_2(x)=x^2-2,\qquad
P_3(x)=x^3-3x.
\]
Hence
\begin{align*}
\Phi_3(x)
&=
6-\{x+(x^2-2)+(x^3-3x)\} \\
&=
(2-x)(x^2+3x+4),
\end{align*}
and therefore
\[
\Psi_3(x)=x^2+3x+4,
\]
and
its roots are
\[
\gamma_\pm
=
\frac{-3\pm i\sqrt7}{2}.
\]

For each $\gamma_\pm$, take $\rho_\pm$ satisfying
\[
\rho_\pm+\rho_\pm^{-1}=\gamma_\pm,
\qquad
|\rho_\pm|<1.
\]
Furthermore, define
\[
V_n^{(\pm)}
=
\frac{\rho_\pm^n-\rho_\pm^{-n}}
{\rho_\pm-\rho_\pm^{-1}}.
\]
Then, by Lemma \ref{lem54},
\[
V_{n+1}^{(\pm)}
=
\gamma_\pm V_n^{(\pm)}-V_{n-1}^{(\pm)}
\]
holds.
Moreover,
\[
\Psi_3'(x)=2x+3,
\]
and hence
\[
\Psi_3'(\gamma_+)=i\sqrt7,
\qquad
\Psi_3'(\gamma_-)=-i\sqrt7.
\]
Therefore
\[
A_\pm
=
-\frac{6}{(2-\gamma_\pm)\Psi_3'(\gamma_\pm)}
\]
gives
\[
A_+
=
-\frac{3}{14}(1-i\sqrt7),
\qquad
A_-
=
-\frac{3}{14}(1+i\sqrt7).
\]

Then we have the following: 
\begin{corollary}\label{thm71}
The average hitting time from the vertex $0$ to the vertex $\ell$ on $C_N^3$ is given by
\[
h(0,\ell)
=
\frac{3}{14}\ell(N-\ell)
+
2N\operatorname{Re}
\left(
A_+
\frac{
V_\ell^{(+)}V_{N-\ell}^{(+)}
}{
V_N^{(+)}
}
\right).
\]
\end{corollary}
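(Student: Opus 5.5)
The plan is to specialize Theorem~\ref{thm51} to $k=3$ and then show that the two correction terms are complex conjugates of each other.

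\emph{Quadratic term.} For $k=3$ the coefficient is $6/((k+1)(2k+1))=6/28=3/14$, which is exactly the first term of the claimed formula.

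\emph{Correction terms.} There are two roots, $\gamma_+$ and $\gamma_-$, so Theorem~\ref{thm51} gives
\[
N\left(A_+\frac{V_\ell^{(+)}V_{N-\ell}^{(+)}}{V_N^{(+)}}+A_-\frac{V_\ell^{(-)}V_{N-\ell}^{(-)}}{V_N^{(-)}}\right).
\]
It suffices to prove that the second summand is the complex conjugate of the first. Their sum is then $2\operatorname{Re}$ of the first summand, which is the claimed formula.

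\emph{Conjugacy of the ingredients.}
\begin{itemize}
\item Since $\Psi_3$ has real coefficients, $\gamma_-=\overline{\gamma_+}$.
\item The computed values satisfy $A_-=\overline{A_+}$. This also follows directly from the formula $A_a=-6/((2-\gamma_a)\Psi_3'(\gamma_a))$, because $\Psi_3'$ has real coefficients.
\item For the $\rho$'s: if $\rho_+$ solves $\rho+\rho^{-1}=\gamma_+$ with $|\rho_+|<1$, then $\overline{\rho_+}$ solves $\rho+\rho^{-1}=\gamma_-$ and has modulus less than $1$.
\end{itemize}

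\emph{Main obstacle: uniqueness of $\rho_-$.} The one point needing care is that $\rho_-$ is uniquely determined. The two solutions of $\rho+\rho^{-1}=\gamma$ have product $1$, so exactly one of them has modulus less than $1$, provided neither has modulus $1$. A solution of modulus $1$ would force $\gamma=2\cos\theta\in[-2,2]$ to be real. Since $\gamma_\pm$ are non-real, this cannot happen. Hence $\rho_-=\overline{\rho_+}$.

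\emph{Conclusion.} The definition of $V_n^{(a)}$ is a rational expression with real coefficients in $\rho_a$, so $V_n^{(-)}=\overline{V_n^{(+)}}$ for all $n$. Therefore the $a=-$ term is the conjugate of the $a=+$ term. We also need $V_N^{(+)}\neq 0$, i.e.\ $\rho_+^{2N}\neq1$, which holds because $|\rho_+|<1$. This completes the plan.
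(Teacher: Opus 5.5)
Your proposal is correct and follows the paper's proof: specialize Theorem~\ref{thm51} to $k=3$ (giving $6/28=3/14$), then note that the $a=-$ correction term is the complex conjugate of the $a=+$ term, so their sum is $2\operatorname{Re}$ of the latter. The paper simply asserts this conjugacy, whereas you justify it, in particular that $\rho_-=\overline{\rho_+}$ is forced because $\gamma_\pm$ are non-real.
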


\begin{proof}
Substituting $k=3$ into Theorem \ref{thm51}, we obtain
\[
h(0,\ell)
=
\frac{3}{14}\ell(N-\ell)
+
N
\left(
A_+
\frac{
V_\ell^{(+)}V_{N-\ell}^{(+)}
}{
V_N^{(+)}
}
+
A_-
\frac{
V_\ell^{(-)}V_{N-\ell}^{(-)}
}{
V_N^{(-)}
}
\right).
\]

Since
\[
A_-
\frac{
V_\ell^{(-)}V_{N-\ell}^{(-)}
}{
V_N^{(-)}
}
=
\overline{
A_+
\frac{
V_\ell^{(+)}V_{N-\ell}^{(+)}
}{
V_N^{(+)}
}
},
\]
we have 
\[
A_+
\frac{
V_\ell^{(+)}V_{N-\ell}^{(+)}
}{
V_N^{(+)}
}
+
A_-
\frac{
V_\ell^{(-)}V_{N-\ell}^{(-)}
}{
V_N^{(-)}
}
=
2\operatorname{Re}
\left(
A_+
\frac{
V_\ell^{(+)}V_{N-\ell}^{(+)}
}{
V_N^{(+)}
}
\right).
\]

This completes the proof.
\end{proof}

\begin{corollary} \label{cor72}
The effective resistance between the vertices $0$ and $\ell$ in $C_N^3$ is given by
\[
R(0,\ell)
=
\frac{1}{14N}\ell(N-\ell)
+
\frac23
\operatorname{Re}
\left(
A_+
\frac{
V_\ell^{(+)}V_{N-\ell}^{(+)}
}{
V_N^{(+)}
}
\right).
\]
\end{corollary}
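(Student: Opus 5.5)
The corollary follows by combining Theorem \ref{thm62} with Corollary \ref{thm71}. By Theorem \ref{thm62}, specialized to $k=3$, the effective resistance satisfies
\[
R(0,\ell)=\frac{h(0,\ell)}{3N}.
\]
This rests on the commute time identity and the vertex-transitivity of $C_N^3$. Here $C_N^3$ is $6$-regular because we assume $N\ge 7$, so it has $3N$ edges. I will simply invoke that theorem rather than re-derive it.

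Next, I will substitute the expression for $h(0,\ell)$ from Corollary \ref{thm71}. The quadratic term becomes
\[
\frac{1}{3N}\cdot\frac{3}{14}\,\ell(N-\ell)=\frac{1}{14N}\,\ell(N-\ell).
\]
The correction term becomes
\[
\frac{1}{3N}\cdot 2N\operatorname{Re}\Bigl(A_+\frac{V_\ell^{(+)}V_{N-\ell}^{(+)}}{V_N^{(+)}}\Bigr)=\frac23\operatorname{Re}\Bigl(A_+\frac{V_\ell^{(+)}V_{N-\ell}^{(+)}}{V_N^{(+)}}\Bigr).
\]
Adding the two terms gives exactly the stated formula.

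As a consistency check, I will also specialize the second display of Theorem \ref{thm62} directly. This gives
\[
\frac{6}{N\cdot 3\cdot 4\cdot 7}=\frac{1}{14N}
\]
for the quadratic coefficient. The remaining part is $\frac13\sum_{\pm}A_\pm V_\ell^{(\pm)}V_{N-\ell}^{(\pm)}/V_N^{(\pm)}$, which equals $\frac23\operatorname{Re}(\cdots)$ by the conjugate-pairing argument used in the proof of Corollary \ref{thm71}. That argument uses the fact that $\gamma_-=\overline{\gamma_+}$, so we may take $\rho_-=\overline{\rho_+}$. Then $V_n^{(-)}=\overline{V_n^{(+)}}$ and $A_-=\overline{A_+}$.

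The only point that needs care is that conjugate choice of $\rho_\pm$, and it is already established in the preceding corollary. The rest is arithmetic, so I expect no real obstacle.
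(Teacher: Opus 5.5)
Your reduction is exactly the paper's argument. You specialize Theorem~\ref{thm62} to $k=3$, so $R(0,\ell)=h(0,\ell)/(3N)$, and divide Corollary~\ref{thm71} by $3N$. The arithmetic and the conjugate pairing are fine. The problem is that the input is false, so the statement as written is false too.

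The error is in the proof of Theorem~\ref{thm51}. It asserts ``by direct calculation'' that $V_\ell V_{N-\ell}/V_N=(1-\rho^\ell)(1-\rho^{N-\ell})/\bigl((\rho^{-1}-\rho)(1-\rho^N)\bigr)$. But $V_n=\rho^{1-n}(1-\rho^{2n})/(1-\rho^2)$, which gives
\[
\frac{V_\ell V_{N-\ell}}{V_N}=\frac{(1-\rho^{2\ell})(1-\rho^{2(N-\ell)})}{(\rho^{-1}-\rho)(1-\rho^{2N})},
\]
with every exponent doubled. Already for $N=2$, $\ell=1$ the two sides are $1/\gamma$ and $1/(\gamma+2)$.

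Now test the statement at $N=7$. Here $C_7^3=K_7$, so $R(0,1)=2/7$. Iterate $V_{n+1}=\gamma_+V_n-V_{n-1}$ from $V_0=0$, $V_1=1$, using $\gamma_+^2=-3\gamma_+-4$. This gives $V_6=-28\gamma_+-36$ and $V_7=42\gamma_++119$, hence $\operatorname{Re}(A_+V_6/V_7)=1734/6223$. The stated formula then yields
\[
R(0,1)=\frac{3}{49}+\frac{1156}{6223}=\frac{1537}{6223}\neq\frac{1778}{6223}=\frac27 .
\]
The same defect appears for $k=2$: on $C_5^2=K_5$, the $F_{2\ell}F_{2(N-\ell)}/F_{2N}$ formula gives $h(0,1)=172/55$ instead of $4$.

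Your consistency check against the second display of Theorem~\ref{thm62} cannot catch this, because both displays inherit the same false identity. Invoking a prior result without re-deriving it is fine in principle, but here it carries the error into your conclusion.

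The correct ingredient is Lemma~\ref{lem53}. Replace $V_\ell^{(+)}V_{N-\ell}^{(+)}/V_N^{(+)}$ by
\[
\frac{(1-\rho_+^\ell)(1-\rho_+^{N-\ell})}{(\rho_+^{-1}-\rho_+)(1-\rho_+^N)}=\frac{W_\ell W_{N-\ell}}{\beta\,W_N}.
\]
Here $\sigma^2=\rho_+$ and $\beta=\sigma+\sigma^{-1}$, so $\beta^2=\gamma_++2$. The sequence $W_n=(\sigma^n-\sigma^{-n})/(\sigma-\sigma^{-1})$ satisfies $W_{n+1}=\beta W_n-W_{n-1}$, and the expression does not depend on the choice of square root $\sigma$.

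With this replacement your argument goes through verbatim, conjugate pairing included. At $N=7$, $\ell=1$ the real part becomes $33/98$, giving $R(0,1)=\frac{3}{49}+\frac{11}{49}=\frac27$. For $k=2$ the same replacement turns $F_{2\ell}F_{2(N-\ell)}/F_{2N}$ into $F_\ell F_{N-\ell}/F_N$, which does return $h(0,1)=4$ on $K_5$.
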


\begin{proof}
By Theorem \ref{thm62},
\[
R(0,\ell)=\frac{h(0,\ell)}{3N}.
\]

Substituting Corollary \ref{thm71} into this identity, we obtain
\[
R(0,\ell)
=
\frac{1}{14N}\ell(N-\ell)
+
\frac23
\operatorname{Re}
\left(
A_+
\frac{
V_\ell^{(+)}V_{N-\ell}^{(+)}
}{
V_N^{(+)}
}
\right).
\]
\end{proof}

\begin{corollary} \label{thm73}
The number of spanning trees of $C_N^3$ is given by
\[
\tau(C_N^3)
=
N
\left|
\frac{
(1-\rho_+^N)^2
}{
\rho_+^{N-1}(1-\rho_+)^2
}
\right|^2.
\]
\end{corollary}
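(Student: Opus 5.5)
The plan is to specialize Theorem \ref{thm65} to $k=3$ and then use the fact that the two factors are complex conjugates of each other. With $k=3$ there are exactly two roots of $\Psi_3(x)=x^2+3x+4$, namely $\gamma_\pm=(-3\pm i\sqrt7)/2$. Theorem \ref{thm65} then gives
\[
\tau(C_N^3)=N\,Q_+Q_-,\qquad Q_\pm=\frac{(1-\rho_\pm^N)^2}{\rho_\pm^{N-1}(1-\rho_\pm)^2}.
\]
One point needs a check first. In the proof of Theorem \ref{thm65} the paper writes $\Psi_k(x)=\prod_a(x-\gamma_a)$, which takes the leading coefficient $c$ to be $1$. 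For $k=3$ this holds: $\Psi_3$ is monic, as computed just before Corollary \ref{thm71}. So Theorem \ref{thm65} applies verbatim with no extra constant.

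The key step is to show $\rho_-=\overline{\rho_+}$. Since $\Psi_3$ has real coefficients, $\gamma_-=\overline{\gamma_+}$. The equation $\rho+\rho^{-1}=\gamma_+$ has two roots $\rho,\rho^{-1}$, and $\rho_+$ is the one of modulus less than $1$. Conjugating gives $\overline{\rho_+}+\overline{\rho_+}^{-1}=\gamma_-$ with $|\overline{\rho_+}|=|\rho_+|<1$.

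This root is also uniquely determined. Because $\gamma_+\notin[-2,2]$ (it is not even real), neither root of $\rho^2-\gamma_+\rho+1=0$ lies on the unit circle. Their product is $1$, so exactly one has modulus less than $1$. Hence $\rho_-=\overline{\rho_+}$.

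It then follows that $Q_-=\overline{Q_+}$, since $Q_\pm$ is a rational expression with real coefficients in $\rho_\pm$. Therefore $Q_+Q_-=|Q_+|^2$, which is the claimed formula. Note that $Q_+\neq 0$ and the expression is well defined, because $\rho_+\neq 0$, $\rho_+\neq1$, and $|\rho_+^N|<1$.

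I expect the main obstacle to be the uniqueness of $\rho_-$. The conjugation argument needs the choice $|\rho_a|<1$ to determine $\rho_a$ unambiguously, and that requires the observation above that no root lies on the unit circle. Everything else is direct substitution into Theorem \ref{thm65}.
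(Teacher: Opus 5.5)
Your proposal is correct and is essentially the paper's own proof. You specialize Theorem~\ref{thm65} to $k=3$ and use $\rho_-=\overline{\rho_+}$ to combine the two conjugate factors into $|Q_+|^2$, and your uniqueness argument for the root of modulus $<1$ (via $\gamma_+\notin[-2,2]$) supplies a detail the paper leaves implicit. On your ``applies verbatim'' check: $\Psi_k$ is monic for every $k$ (each $P_r$ is monic), but the product evaluations in the proof of Theorem~\ref{thm65} each drop a sign, since $\prod_{j=1}^{N-1}(z_j-\rho)=(-1)^{N-1}\frac{1-\rho^N}{1-\rho}$ and $\prod_{j=1}^{N-1}z_j=(-1)^{N-1}$, so each root actually contributes $(-1)^{N-1}Q_a$; for $k=3$ the two signs cancel, so your conclusion is unaffected.
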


\begin{proof}
Applying Theorem \ref{thm65} with $k=3$, we obtain
\[
\tau(C_N^3)
=
N
\prod_{\sigma=\pm}
\frac{
(1-\rho_\sigma^N)^2
}{
\rho_\sigma^{N-1}(1-\rho_\sigma)^2
}.
\]

Since
\[
\rho_-=\overline{\rho_+},
\]
the two factors are complex conjugates of each other.
Therefore
\[
\tau(C_N^3)
=
N
\left|
\frac{
(1-\rho_+^N)^2
}{
\rho_+^{N-1}(1-\rho_+)^2
}
\right|^2.
\]
\end{proof}

\begin{remark}
The formula for $\tau(C_N^3)$ also appears in \cite{Miezaki}. 
\end{remark}

The spanning tree formula in Corollary \ref{thm73}
is consistent with known product formulas for circulant graphs
(see, for example, \cite{BozkurtEtAl}).
However, the present derivation arises naturally from the explicit formulas for hitting times and the associated Fourier-analytic structure, and is unified with the corresponding formulas for effective resistances, two-component spanning forests, and spanning trees of vertex-identified graphs.

\begin{corollary} \label{thm74}
Let $F_{C_N^3}(0 \mid \ell)$ denote the number of two-component spanning forests of $C_N^3$ such that the vertices $0$ and $\ell$ belong to different connected components.

Then
\[
F_{C_N^3}(0 \mid \ell)
=
\tau(C_N^3)
\left\{
\frac{1}{14N}\ell(N-\ell)
+
\frac23
\operatorname{Re}
\left(
A_+
\frac{
V_\ell^{(+)}V_{N-\ell}^{(+)}
}{
V_N^{(+)}
}
\right)
\right\}.
\]
\end{corollary}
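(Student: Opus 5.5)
The plan is to specialize Theorem~\ref{thm66} to $k=3$ and then substitute the explicit effective-resistance formula of Corollary~\ref{cor72}. Theorem~\ref{thm66} (equivalently the matrix-forest theorem $F_G(u\mid v)=\tau(G)R(u,v)$ from \cite{ChebotarevShamis1997}, combined with Theorem~\ref{thm62}) gives
\[
F_{C_N^3}(0\mid \ell)=\tau(C_N^3)\,\frac{h(0,\ell)}{3N}=\tau(C_N^3)\,R(0,\ell).
\]
The hypothesis $N\ge 7=2k+1$ is in force throughout this subsection. It guarantees that $C_N^3$ is $6$-regular with $m=3N$ edges, so the factor $1/(Nk)=1/(3N)$ is correct.

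Next, I will insert the expression for $R(0,\ell)$ from Corollary~\ref{cor72}, namely
\[
R(0,\ell)=\frac{1}{14N}\ell(N-\ell)+\frac23\operatorname{Re}\left(A_+\frac{V_\ell^{(+)}V_{N-\ell}^{(+)}}{V_N^{(+)}}\right).
\]
Multiplying through by $\tau(C_N^3)$ yields exactly the claimed identity.

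As a consistency check, I will verify the constants directly from Corollary~\ref{thm71}. Dividing $\tfrac{3}{14}\ell(N-\ell)$ by $3N$ gives $\tfrac{1}{14N}\ell(N-\ell)$. Dividing $2N\operatorname{Re}(\cdots)$ by $3N$ gives $\tfrac23\operatorname{Re}(\cdots)$.

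I do not expect a real obstacle here. The only point needing care is that the real-part form of Corollary~\ref{thm71} is legitimate, i.e.\ that the $a=-$ term in Theorem~\ref{thm51} is the complex conjugate of the $a=+$ term. This follows because $\Psi_3$ has real coefficients, so $\gamma_-=\overline{\gamma_+}$ and $A_-=\overline{A_+}$. Moreover, the choice $|\rho_\pm|<1$ forces $\rho_-=\overline{\rho_+}$, hence $V_n^{(-)}=\overline{V_n^{(+)}}$. This conjugation step was already used in Corollary~\ref{thm71}, so here it suffices to cite it. If desired, I can also substitute Corollary~\ref{thm73} for $\tau(C_N^3)$ to obtain a fully explicit formula in terms of $\rho_+$.
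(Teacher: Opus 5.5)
Your reduction is the same as the paper's: specialize Theorem~\ref{thm66} (so $F=\tau R$ with $R=h/(3N)$) and substitute Corollary~\ref{cor72}. The identity $F_{C_N^3}(0\mid\ell)=\tau(C_N^3)\,h(0,\ell)/(3N)$ is sound. The gap is in the step you call routine. The formula you import from Corollary~\ref{cor72}, which comes from Corollary~\ref{thm71} and Theorem~\ref{thm51}, is false, so neither your argument nor the paper's proves the displayed identity.

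The error is in the ``direct calculation'' in the proof of Theorem~\ref{thm51}. From $V_n=(\rho^n-\rho^{-n})/(\rho-\rho^{-1})$ one actually gets
\[
\frac{V_\ell V_{N-\ell}}{V_N}=\frac{(1-\rho^{2\ell})(1-\rho^{2(N-\ell)})}{(\rho^{-1}-\rho)(1-\rho^{2N})},
\]
which doubles the exponents relative to what Lemma~\ref{lem53} (itself correct) produces. Since $V_n$ is unchanged under $\rho\mapsto\rho^{-1}$, no other choice of root repairs this. Your conjugation check is fine, but it only tests the bookkeeping, not this formula.

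Here is a concrete failure. Take $N=7$ and $\ell=1$, so $C_7^3=K_7$ and the true value is $F_{K_7}(0\mid 1)=\tau(K_7)\cdot\frac27=2\cdot 7^4=4802$.

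Now evaluate the right-hand side. Using $\gamma_+^2=-3\gamma_+-4$ and the recurrence, $V_6^{(+)}=-28\gamma_+-36$ and $V_7^{(+)}=42\gamma_++119$. Hence $\operatorname{Re}\bigl(A_+V_6^{(+)}/V_7^{(+)}\bigr)=1734/6223$. The braces then equal $\frac{3}{49}+\frac{1156}{6223}=\frac{1537}{6223}$, whereas the true resistance is $R=\frac27=\frac{1778}{6223}$. The right-hand side becomes $7^5\cdot\frac{1537}{6223}=\frac{527191}{127}$, which is not even an integer.

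A quicker sanity check shows the same error for $k=2$. With $N=5$, so that $C_5^2=K_5$, the paper's Fibonacci formula gives $h(0,1)=\frac{172}{55}$ instead of $4$.

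To get a true statement, replace the correction term by what Lemma~\ref{lem53} actually gives:
\[
NA_+\frac{(1-\rho_+^\ell)(1-\rho_+^{N-\ell})}{(\rho_+^{-1}-\rho_+)(1-\rho_+^N)}.
\]
Equivalently, choose $\sigma$ with $\sigma^2=\rho_+$ and set $W_n=\frac{\sigma^n-\sigma^{-n}}{\sigma-\sigma^{-1}}$. This sequence has recurrence coefficient $\sigma+\sigma^{-1}$, where $(\sigma+\sigma^{-1})^2=\gamma_++2$. The correction term is then $\frac{NA_+}{\sigma+\sigma^{-1}}\frac{W_\ell W_{N-\ell}}{W_N}$. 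For $k=2$ this yields $F_\ell F_{N-\ell}/F_N$, and it gives the correct value $h=6$ on $K_7$. After this correction, your two-line reduction goes through unchanged.
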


\begin{proof}
By Theorem \ref{thm66},
\[
F_{C_N^3}(0 \mid \ell)
=
\tau(C_N^3)R(0,\ell).
\]

Substituting Corollary \ref{cor72}, we obtain
\[
F_{C_N^3}(0 \mid \ell)
=
\tau(C_N^3)
\left\{
\frac{1}{14N}\ell(N-\ell)
+
\frac23
\operatorname{Re}
\left(
A_+
\frac{
V_\ell^{(+)}V_{N-\ell}^{(+)}
}{
V_N^{(+)}
}
\right)
\right\}.
\]
\end{proof}

\begin{corollary} \label{cor75}
Let $C_N^3/(0\sim \ell)$ be the graph obtained from $C_N^3$ by identifying the vertices $0$ and $\ell$.

Then
\[
\tau(C_N^3/(0\sim \ell))
=
\tau(C_N^3)
\left\{
\frac{1}{14N}\ell(N-\ell)
+
\frac23
\operatorname{Re}
\left(
A_+
\frac{
V_\ell^{(+)}V_{N-\ell}^{(+)}
}{
V_N^{(+)}
}
\right)
\right\}.
\]
\end{corollary}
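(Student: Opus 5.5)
The plan is to follow the pattern of Corollary \ref{cor67}, specialized to $k=3$. By Chaiken's all-minors Matrix-Tree Theorem, as invoked in the proof of Corollary \ref{cor67}, we have
\[
\tau\bigl(C_N^3/(0\sim\ell)\bigr)=F_{C_N^3}(0\mid\ell).
\]
So the statement reduces to the formula for two-component spanning forests. Corollary \ref{thm74} gives precisely the claimed expression for $F_{C_N^3}(0\mid\ell)$, and substituting it completes the argument. Alternatively, one can go through Corollary \ref{cor67} directly: with $k=3$ it reads
\[
\tau\bigl(C_N^3/(0\sim\ell)\bigr)=\tau(C_N^3)\,\frac{h(0,\ell)}{3N}.
\]
Inserting Corollary \ref{thm71} and dividing by $3N$ gives
\[
\frac{3}{14}\cdot\frac{1}{3N}=\frac{1}{14N}
\qquad\text{and}\qquad
\frac{2N}{3N}=\frac23,
\]
which matches Corollary \ref{cor72}.

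The only point needing care is the hypothesis $N\ge 7=2k+1$ assumed throughout this subsection. It guarantees that $C_N^3$ is $6$-regular with $3N$ edges, which is what the identity $R=h/(Nk)$ in Theorem \ref{thm62} relies on. It also ensures that the roots $\gamma_\pm$ are distinct and non-real, so that $\rho_-=\overline{\rho_+}$, $V_n^{(-)}=\overline{V_n^{(+)}}$ and $A_-=\overline{A_+}$. These conjugate relations justify the $\operatorname{Re}$ form already established in Corollary \ref{thm71}.

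I expect no genuine obstacle, since the statement is a direct substitution. The one step worth checking is that the edge-multiplicity convention in the contracted graph $C_N^3/(0\sim\ell)$ is the one under which Chaiken's identity holds. Edges between $0$ and $\ell$ become loops, which are discarded, and parallel edges are kept. I would state this convention explicitly before citing \cite{Chaiken1982}.
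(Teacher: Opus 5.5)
Your argument is correct and is exactly the paper's proof: reduce via the Chaiken identity of Corollary \ref{cor67} to $F_{C_N^3}(0\mid\ell)$, then substitute Corollary \ref{thm74}. One small correction to your side remark: $N\ge 7$ is what gives $6$-regularity (hence $m=3N$ and the operator $L_3$), but the conjugacy relations $\gamma_-=\overline{\gamma_+}$, $\rho_-=\overline{\rho_+}$, $A_-=\overline{A_+}$ do not depend on $N$ at all; they hold because $\Psi_3(x)=x^2+3x+4$ has real coefficients and non-real roots.
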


\begin{proof}
By Corollary \ref{cor67},
\[
\tau(C_N^3/(0\sim \ell))
=
F_{C_N^3}(0 \mid \ell).
\]

Therefore the assertion follows from Corollary \ref{thm74}.
\end{proof}

\begin{remark}
Why do second-order linear recurrence relations naturally appear for cycle power graphs? 
In forthcoming papers \cite{{Miezaki-Tamura-2},{Miezaki-Tamura-3}}, we provide a natural interpretation of this phenomenon using the kernel of the Laplacian operator and the theory of association schemes. 
Furthermore, in the same papers, we investigate this problem together with directed and weighted analogues of the present work.
\end{remark}

\section*{Acknowledgments}
The authors are supported by JSPS KAKENHI (25K06927).


\end{document}